\tikzset{font=\small, 
point/.style={fill, circle, inner sep=1.2pt}, 
>={Straight Barb[round,angle=60:1.2mm 1]} 
} 
\patchcmd{\@tocline}{\hfil}
{\nobreak\leaders\hbox{\ifnum#1<2\hfill\else$\m@th%
\mkern 4.5 mu\hbox{.}\mkern 4.5 mu$\fi}\hfill\nobreak}{}{}
\def\l@section{\@tocline{1}{10pt}{1pc}{}{\bfseries}}
\def\l@subsection{\@tocline{2}{0pt}{\dimexpr 1pc+2em}{}{}}
\newcommand{\hole}[1]{
\ifthenelse{\boolean{shownotes}}%
{\begin{center} \fbox{ \rule {.25cm}{0cm}
\rule[-.1cm]{0cm}{.4cm} \parbox{.85\textwidth}{\begin{center}
\texttt{#1}\end{center}} \rule {.25cm}{0cm}}\end{center}}
{}
}
\newtheorem{theorem}{Theorem}[section]
\newtheorem{proposition}[theorem]{Proposition}
\newtheorem{lemma}[theorem]{Lemma}
\theoremstyle{remark}
\newtheorem{remark}[theorem]{Remark}
\newcommand{\R}{\mathbb{R}}
\newcommand{\T}{\mathbb{T}^d}
\newcommand{\cR}{\mathcal{R}}
\newcommand{\cS}{\mathcal{S}}
\newcommand{\dive}{\mathop{\mathrm {div}}}
\newcommand{\curl}{\mathop{\mathrm {curl}}}
\newcommand{\del}{\partial}
\newcommand{\Tone}{\mathbb{T}^1}
\numberwithin{equation}{section}
\begin{document}

\title[Oscillations in Hyperbolic-Parabolic Systems]{Sustained Oscillations in Hyperbolic-Parabolic Systems}

\author[A.E. Tzavaras]{Athanasios E. Tzavaras}
\address[Athanasios E. Tzavaras]{
\newline
Computer, Electrical and Mathematical Science and Engineering Division 
\newline
King Abdullah University of Science and Technology (KAUST)
\newline 
Thuwal 23955-6900,  Saudi Arabia
}
\email{athanasios.tzavaras@kaust.edu.sa}

\baselineskip=18pt

\begin{abstract}
We construct examples of oscillating solutions with persistent oscillations for various  hyperbolic-parabolic systems
with singular diffusion matrices that appear in mechanics. These include, an example for the equations of nonlinear viscoelasticity
of Kelvin-Voigt type with stored energy that violates rank-one convexity, which amounts to a time-dependent variant of twinning solutions. 
An example pertaining to the system of gas dynamics with thermal effects for a viscous, adiabatic gas. 
Finally, an example for the compressible Navier-Stokes system in one-space dimension with nonmonotone pressure function.
We also study the existence of oscillating solutions for linear hyperbolic-parabolic systems with singular diffusion matrices.

\end{abstract}

\maketitle

\tableofcontents

\section{Introduction}

The effectiveness of dissipation on smoothing of discontinuities was proposed by Dafermos \cite{Dafermos81} as 
a yardstick for classifying the hyperbolic-parabolic systems of thermomechanics.
The properties of the Green function for the linearized operator around a constant state capture the efficacy of dissipation, as exemplified 
in the penetrating study of Liu and Zeng \cite{LZ97}.
The picture is less understood regarding propagation of oscillations.  It is well-known that systems of hyperbolic conservation laws can propagate 
oscillations from the initial-data to solutions, and  propagation and/or cancellations of oscillations and its relation to entropy inequalities
has been an extensively studied subject. By contrast, fully-parabolic systems instantaneously smoothen initial oscillations.
Less is known for the intermediate case of hyperbolic-parabolic systems which is the objective of the present work. 

This question was brought to focus by an example, exhibiting sustained oscillations induced by the initial data, \cite[sec 7]{KLST23}, 
for the hyperbolic-parabolic system 
\begin{equation}
\label{eq:onedvisco}
\begin{aligned}
u_t &= v_x
\\
v_t &= \sigma(u)_x + v_{xx}\, ,
\end{aligned}
\end{equation}
with  $\sigma(u)$ non-monotone. The example was produced  to
highlight the gap among two available existence theories for motions $y : (0,T) \times \T \to \R^d$
on the torus $\T$, $d=2,3$, describing the Cauchy problem of nonlinear viscoelastic materials
\begin{align}\label{eq:wave}
&\partial_{tt}y-\dive  \Big (  \frac{\del W}{\del F} (\nabla y) \Big ) = \Delta \partial_{t}y \, , 
\\
\nonumber
&y|_{t=0}=y_0,\quad  \partial_{t}y|_{t=0}=v_0 \, .
\end{align}
Introducing the velocity $v = \del_t y$ and the deformation gradient $F = \nabla y$,  the second order evolution \eqref{eq:wave} 
is expressed as a hyperbolic-parabolic system
\begin{equation}\label{eq:main}
\begin{aligned}
\partial_{t}v-\dive(S(F)) &= \Delta\,v \\
\partial_{t}F-\nabla\,v&=0\\
\curl\,F&=0\, .
\end{aligned}
\end{equation}	
There are  two frameworks  for global existence of weak solutions for  \eqref{eq:main}:
(a) one with spatial regularity $v \in L^2$, $F \in L^p$ in \cite{FD97,Demoulini00};   and (b) a second with spatial regularity 
$v \in L^2$, $F \in H^1$ in \cite{KLST23}. A key and perhaps unexpected difference among the two
is the possibility of sustained oscillations in the framework $v \in L^2$, $F \in L^p$ demonstrated by the example in \cite[sec 7]{KLST23}.

A second motivation stems from the existence theory of the compressible Navier-Stokes system
\begin{equation}
\label{eq:compNS}
\begin{aligned}
\del_t \rho + \dive \rho u &= 0
\\
\del_t \rho u + \dive \rho u \otimes u  + \nabla p (\rho) &=  \dive \Big ( \mu (\nabla u + \nabla u^T) + \lambda (\dive u )  \, I \, \Big )
\end{aligned}
\end{equation}
where $\rho$ and $u$ are the density and velocity of the fluid while $\mu$ the shear viscosity and $\lambda$ the second viscosity 
(or Lame constants) satisfy $\mu > 0$, $\lambda + \mu > 0$.
The existence theory of weak solutions of Lions \cite{b-Lions98} and Feireisl \cite{b-Feireisl04} makes essential use of the propagation 
of compactness property from the initial data. Lions conjectured that the compressible Navier-Stokes system may exhibit propagation of
oscillations and provided an example for a variant of \eqref{eq:compNS} that includes a forcing term, see \cite[Rmk 5.8]{b-Lions98}. 
We will provide an example of propagation of oscillations for the (unforced) compressible Navier-Stokes system in one space dimension. 
The question of deriving the effective equations of the associated homogenization problem is studied in \cite{Serre91, Hillairet07}.

Our objective is to explore persistent oscillations in hyperbolic-parabolic systems. The oscillations 
are related to the fact that systems of mechanics are second order evolutions for the motion
and, as the reader will observe, this plays a fundamental role in the examples that we provide. We explore the existence
of sustained oscillations first for linear systems and then for nonlinear systems.
We start in section \ref{sec:linosc} with linear hyperbolic-parabolic systems,
\begin{equation}
\del_t U + A \del_x U = B \del_{xx} U \, ,
\end{equation}
in one or several space dimensions, where $B$ is singular. We focus on systems originating in mechanics and explore a special structure of oscillations
natural to such problems, where oscillations of the deformation gradient balance with oscillations of the strain-rate.
Examples include the equations of  linear thermoviscoelasticity in one dimension \eqref{lintve1}  or several space dimensions \eqref{lintved}.

In section  \ref{sec:sust}, we consider the system \eqref{eq:main} of nonlinear viscoelasticity  with a stored energy violating rank-1 convexity; 
such models are employed in the theory of phase transitions.
We provide a novel class of dynamic oscillating solutions where the deformation gradient oscillates between two time dependent homogeneous 
states and exhibits jumps across steady interfaces. Such solutions propagate oscillations of initial data to solutions and lead to Young measures
that are convex combinations of Dirac masses. In section \ref{sec:1dnonmon} we study the system  of one-dimensional viscoelasticity
\eqref{longonev}  (or one-dimensional gas dynamics for viscous gases) where the calculations can be made explicit. Similar behavior
appears in a class of one-dimensional models in viscoplasticity studied in section \ref{sec:mdvisco}.

The existence of oscillatory solutions is  extended in section \ref{sec:1dthermoel} to the system describing gas dynamics with thermal effects 
for viscous but adiabatic gases,
\begin{equation}
\label{intro:vhcg}
\begin{aligned}
u_t - v_x &= 0
\\
v_t - \sigma(u,\theta)_x &= ( \frac{\mu}{u} v_x )_x 
\\
\big ( \tfrac{1}{2} v^2 + e(u, \theta) \big )_t - \Big ( \sigma(u,\theta) \, v \Big )_x 
&= \Big ( \frac{\mu}{u} \,  v \, v_x \Big )_x  \, .
\end{aligned}
\end{equation}
For a special constitutive class \eqref{assform} and a carefully selected nonmonotone pressure weak solutions are constructed 
that exhibit persistent oscillations. The construction combines a class of universal solutions describing uniform extension (or compression)  solutions alternating 
via jumps across steady interfaces that satisfy the Rankine-Hugoniot jump condition.

Finally, in section \ref{sec:comprns}, we consider the one-dimensional compressible Navier-Stokes system
\begin{equation}
\label{intro:compNS}
\begin{aligned}
\rho_t  + (\rho u)_y  &= 0
\\
(\rho u_t  + ( \rho u^2  +  p (\rho) )_y  &=  \mu u_{y y} \, ,
\end{aligned}
\end{equation}
By exploring the transformation between Lagrangian and Eulerian descriptions, we map the Lagrangian variable solutions obtained in section \ref{sec:1dnonmon}
to the Eulerian domain. This leads to a special solution of the compressible Navier-Stokes system \eqref{intro:compNS},
\begin{align}
u(t,y) = \frac{y}{t} \, , \qquad 
\rho(t,y) &= 
\begin{cases} \;  \frac{1}{ta}  \quad & \quad  k v_0 (\theta) < \frac{y}{t}   <  k v_0 (\theta) + a \theta   \\[5pt]
                      \;  \frac{1}{tb}  \quad &  \; \; k v_0 (\theta) + a \theta  < \frac{y}{t}   < (k + 1) v_0 (\theta) 
\end{cases}
\quad k \in \mathbb{Z} \, ,
\label{intro:eqnrho}
\end{align}
where $v_0 (\theta) =  \theta a + (1-\theta) b$, $\theta \in (0,1)$,  with persistent oscillations. 
The function $(\rho, u)$ solves \eqref{intro:compNS} provided the nonmonotone pressure satisfies
$p(\frac{1}{ta}) = p(\frac{1}{tb})$ for $t \in [1.2]$. The solution suggests  propagation of oscillations from the initial data  at $t=1$ to the domain $[1,2] \times \R$.

\section{Oscillatory Solutions in Linear Hyperbolic-Parabolic Systems}\label{sec:linosc}

In this section we study linear systems that combine hyperbolic and parabolic characters first in one-space
dimension and then in several space dimensions. The goal is to build a class of solutions for which initial
oscillations propagate into solutions despite the smoothing effect of parabolic characters, exploiting
degeneracies of the parabolic characters. The form of solutions is motivated by examples that arise
in the theory of thermoviscoelasticity and relate to the mechanical interpretation of such models.

In the first two subsections we study the  one-dimensional case, and then we extend the results to the 
linear systems in several space dimensions.

\subsection{Oscillations in a 1-d linear thermoviscoelasticity} \label{sec:osc1d}
Consider the system
\begin{equation}
\label{lintve1}
\begin{aligned}
\frac{\del^2 u}{\del t^2} &= \lambda \frac{\del^2 u}{\del x^2} + m \frac{\del \theta}{\del x} + \mu \frac{\del^2}{\del x^2} \frac{\del u}{\del t}
\\[3pt]
\frac{\del \theta}{\del t} &= \kappa \frac{\del^2 \theta}{\del x^2} + m \frac{\del^2 u}{\del x \del t}
\end{aligned}
\end{equation}
where $(u(t,x), \theta(t,x))$ describe a thermomechanical process consisting of the motion $u$ and the temperature $\theta$,
while $\lambda > 0$, $m \in \R$, $\mu > 0$, $\kappa > 0$ are parameters describing  the elasticity modulus, thermoelastic coupling, 
viscosity and heat diffusivity, respectively. The system \eqref{lintve1} is a second order evolution
but may be expressed as a first order (in time) system by introducing
the velocity  $v = \frac{\del u}{\del t}$ and the elastic strain $w = \frac{\del u}{\del x}$ and writing it as
\begin{equation}
\label{lintve2}
\begin{aligned}
\frac{\del w}{\del t}   &= \; \frac{\del v}{\del x}
\\
\frac{\del v}{\del t} &= \lambda \frac{\del w}{\del x} + m \frac{\del \theta}{\del x} + \mu \frac{\del^2 v }{\del x^2} 
\\
\frac{\del \theta}{\del t} &=  m \frac{\del v}{\del x } + \kappa \frac{\del^2 \theta}{\del x^2}
\end{aligned}
\end{equation}
The form \eqref{lintve2} indicates a hyperbolic-parabolic system, but we will be using mostly the form 
\eqref{lintve1} of a second-order evolution. Solutions of \eqref{lintve1} satisfy the energy identity
\begin{equation}
\begin{aligned}
\frac{\del}{\del t} \left ( \tfrac{1}{2} \Big | \frac{\del u}{\del t} \Big |^2 +   \tfrac{\lambda}{2} \Big | \frac{\del u}{\del x} \Big |^2  + \tfrac{1}{2} | \theta  |^2   \right )
&= 
\frac{\del}{\del x} \left (  \frac{\del u}{\del t} \big (  \lambda  \frac{\del u}{\del x}  +  m  \theta + \mu  \frac{\del^2 u}{\del x \del t}  \big )  \right )
\\
&\quad + \frac{\del}{\del x} \left ( \kappa \theta \frac{\del \theta}{\del x} \right ) - \kappa \Big | \frac{\del \theta }{\del x} \Big |^2  - \mu \Big | \frac{\del^2 u}{\del x \del t}  \Big |^2 
\end{aligned}
\end{equation}
Under appropriate boundary conditions (either periodic or no mechanical work and adiabatic at the boundaries) we obtain the energy
dissipation identity
\begin{equation}
\label{energy1}
\frac{d}{dt} \int \Big ( \tfrac{1}{2} \Big | \frac{\del u}{\del t} \Big |^2 +   \tfrac{\lambda}{2} \Big | \frac{\del u}{\del x} \Big |^2  + \tfrac{1}{2} | \theta  |^2  \Big )\, dx
+ \int \kappa \Big | \frac{\del \theta }{\del x} \Big |^2  + \mu \Big | \frac{\del^2 u}{\del x \del t}  \Big |^2  = 0 \, .
\end{equation}

Next, consider the system \eqref{lintve1} and apply the {\it ansatz} for solutions of the form
\begin{equation}
\label{osc1}
\begin{aligned}
u(t,x) &= \frac{1}{n} a(t) e^{ i n x}
\\
\theta(t,x)  &=   - i b  (t) e^{ i n x}
\end{aligned}
\end{equation}
where $i$ is the imaginary unit, $n$ an integer, and $(a(t), - i b(t))$ are complex amplitude functions.
Introducing \eqref{osc1} to \eqref{lintve1} we deduce that 
 $(a(t), b(t))$ satisfies the system of ordinary differential equations
\begin{equation}
\label{ode1}
\begin{aligned}
\ddot{a} &= - \lambda n^2  a + m n^2 b - \mu n^2   \dot a
\\
\dot{b} &= - \kappa n^2 b  - m  \dot{a}
\end{aligned}
\end{equation}
The amplitude $(a, b)$ in \eqref{ode1} was introduced as complex-valued but it is also consistent to select it real-valued, and
we opt for that selection here.

In summary, given $(a, b )$ a real-valued solution of \eqref{ode1} then $(u(t,x), \theta(t,x))$ selected via the ansatz 
\eqref{osc1} produces an oscillatory solution of \eqref{lintve1}. Taking the real (or imaginary) part of \eqref{osc1},
we obtain a real-valued solution of  \eqref{lintve1}, which for large $n$,  might be highly oscillatory, depending on the 
behavior of $(a, b)$ as a function of $n$. The energy of solutions to \eqref{ode1}
gets damped according to the identity
\begin{equation}
\frac{d}{dt} \left ( \tfrac{1}{2} \dot{a}^2 + \tfrac{\lambda n^2}{2} a^2 + \tfrac{n^2}{2} b^2 \right ) + \mu n^2 \dot{a}^2 + \kappa n^4 b^2 = 0 \, .
\end{equation}
The latter is an equation for the Fourier modes \eqref{osc1} inherited from \eqref{energy1}. 
Observe that for the ansatz \eqref{osc1} we have
$$
\begin{aligned}
\frac{\del u}{\del x} &= i a(t) e^{i n x}
\\
\frac{\del u}{\del t} &= \frac{1}{n} \dot{a}(t) e^{i n x}
\\
\frac{\del^2 u}{\del x \del t} &= i \dot{a}(t) e^{i n x}
\end{aligned}
$$
We expect  oscillations of $u(t,x)$ and  the velocity $v = \frac{\del u}{\del t}(t,x)$ decay to zero (for $n$ large)
while oscillations of the strain $w = \frac{\del u}{\del x}(t,x)$, the strain rate $\frac{\del v}{\del x} = \frac{\del^2 u}{\del x \del t}(t,x)$ and the temperature $\theta(t,x)$
induced by the initial data might persist in time. This has to be validated by studying the behavior of solutions of \eqref{ode1}.

Setting $v = \dot{a}$, \eqref{ode1} is expressed as a first order system,
\begin{equation}
\label{ode2}
\frac{d}{dt} 
\begin{bmatrix}
a \\ v \\ b
\end{bmatrix}
=
\begin{bmatrix}
0 & 1 & 0 \\
- \lambda n^2 &  - \mu n^2   &   m n^2  \\
0 & - m & - \kappa n^2 
\end{bmatrix}
\begin{bmatrix}
a \\ v \\ b
\end{bmatrix}
\end{equation}
Let $A = A(n)$ be the coefficient matrix. The eigenvalues $\rho (n)$ of $A(n)$ are computed as 
roots of the cubic polynomial $\det (A - \rho I) = 0$ which reads
\begin{equation}
\label{algeq1}
\rho^3 + (\kappa + \mu) n^2 \rho^2 + (\kappa \mu n^4 + \lambda n^2 + m^2 n^2 ) \rho + \kappa \lambda n^4 = 0 \, .
\end{equation}
The three roots $\rho_1, \rho_2 , \rho_3$  satisfy the Vieta relations
\begin{equation}
\begin{aligned}
\rho_1 +\rho_2 + \rho_3 &= - (\kappa + \mu) n^2
\\
\rho_1 \rho_2 + \rho_2 \rho_3 + \rho_3 \rho_1  &=  \kappa \mu n^4 + (\lambda + m^2) n^2
\\
\rho_1 \rho_2 \rho_3 &= - \kappa \lambda n^4
\end{aligned}
\end{equation}

While we cannot identify the explicit form of the eigenvalues $\rho_1, \rho_2, \rho_3$, it is possible
to do an asymptotic analysis for large $n$. Setting $r := \frac{\rho}{n^2}$, then $r$ are the roots of the  polynomial
\begin{equation}
\label{algeq2}
r^3 + (\kappa + \mu) r^2 + \Big ( \kappa \mu  +  (\lambda  + m^2) \frac{1}{n^2} \Big ) \rho + \frac{\kappa \lambda}{n^2} = 0
\end{equation}
Introduce the asymptotic expansion $r =  r^0 + \frac{1}{n^2} r^1 + \frac{1}{n^4} r^2 + ...$
to the algebraic equation \eqref{algeq2}; after equating the terms of the same orders we obtain
\begin{align}
\label{asymeq1}
&\mbox{at $O(1)$} \quad &&(r^0)^3 + (\kappa + \mu ) (r^0)^2 + \kappa \mu r^0 = 0
\\
\label{asymeq2}
&\mbox{at $O(\tfrac{1}{n^2})$} \quad && \big (  3 ( r^0)^2 + 2 (\kappa + \mu ) r^0 + \kappa \mu \big )  r^1 = - (\lambda + m^2) r^0  - \kappa \lambda
\\
\label{asymeq3}
&\mbox{at $O(\tfrac{1}{n^4})$} \quad && \big ( 3 ( r^0)^2 + 2 (\kappa + \mu ) r^0 + \kappa \mu \big )  r^2 = - 3 r^0 (r^1)^2 - (\kappa + \mu ) (r^1)^2  - (\lambda + m^2) r^1 
\end{align}
From \eqref{asymeq1} the leading terms in the expansion of the roots are $r^0_1 = 0$, $r^0_2 = - \kappa$, $r^0_3 = - \mu$. Since these are distinct,
for $n$ large the roots are real and we proceed to compute their asymptotic expansions
by solving consecutively \eqref{asymeq2}, \eqref{asymeq3} in the terms $r^1$, $r^2$ etc. 
For instance, for $r^0_1 = 0$ we obtain $ r_1^1 = -\frac{\lambda}{\mu}$ and in turn
$r_1^2 = - \frac{\lambda}{\kappa \mu^2} \big (\frac{\kappa \lambda}{\mu} - m^2 \big )$. 
After a straightforward calculation, using $\rho = r n^2$, 
they produce the following asymptotic expansions for the roots $\rho_i (n)$, $n=1, 2, 3$:
\begin{align}
\rho_1 (n) &= \qquad \qquad -\frac{\lambda}{\mu} + \Big ( - \frac{\lambda}{\kappa \mu^2} \big (\frac{\kappa \lambda}{\mu} - m^2 \big ) \Big ) \frac{1}{n^2}   
+ O \Big (\frac{1}{n^4} \Big)
\label{asymev1}
\\
\rho_2(n) &= - \kappa n^2 - \frac{m^2}{\mu - \kappa} +  \Big ( - \frac{m^2}{\kappa (\mu-\kappa)^2} \big ( \lambda + \kappa \frac{m^2}{\mu-\kappa} \big ) \Big ) \frac{1}{n^2}
+ O \Big (\frac{1}{n^4} \Big)
\label{asymev2}
\\
\rho_3(n) &= - \mu n^2+  \Big ( \frac{\lambda}{\mu} +  \frac{m^2}{\mu - \kappa} \Big ) + 
 \Big ( \big ( \frac{\lambda}{\mu}  +  \frac{m^2}{\mu - \kappa} \big )  \big ( \frac{\lambda}{\mu^2} + \kappa \frac{m^2}{\mu-\kappa} \big ) \Big )   \frac{1}{n^2}
+ O \Big (\frac{1}{n^4} \Big)
\label{asymev3}
\end{align}
The reader can easily verify that the first two terms in the expansion of the roots satisfy the Vieta relations 
within  $O \Big ( \frac{1}{n^2} \Big )$.

Next, we focus on the eigenvalue $\rho_1(n)$ in \eqref{asymev1} and the associated eigenvector $\xi_1 (n)$ of the matrix $A(n)$ in \eqref{ode2}. 
Introduce the expansions 
$$
\begin{aligned}
\rho_1 (n) &= r_1^1 + \frac{1}{n^2} r_1^2 + O \Big (\frac{1}{n^4} \Big )  \quad \mbox{ where $ r_1^1 = -\frac{\lambda}{\mu}$, $r_1^2 = - \frac{\lambda}{\kappa \mu^2} \big (\frac{\kappa \lambda}{\mu} - m^2 \big )$ }
\\
\xi_1 (n) &= \xi_1^0 + \frac{1}{n^2}  \xi_1^1 + O \Big (\frac{1}{n^4}  \Big )
\end{aligned}
$$
into the equation $A(n) \xi_1 (n) = \rho_1 (n) \xi_1 (n)$ to obtain

$$
\Bigg \{  n^2 
\underbrace{ 
\begin{bmatrix}
0 & 0 & 0 \\
- \lambda  &  - \mu    &   m   \\
0 & 0 & - \kappa 
\end{bmatrix}
}_{A_0}
+
\underbrace{
\begin{bmatrix}
0 & 1 & 0 \\
0 &  0  &   0 \\
0 & - m & 0
\end{bmatrix}
}_{A_1}
- \big ( r_1^1 + \frac{1}{n^2} r_1^2 + O \big (\frac{1}{n^4} \big ) \big ) I 
\Bigg \}  \left (\xi_1^0 + \frac{1}{n^2}  \xi_1^1 + O \Big (\frac{1}{n^4}  \Big ) \right ) = 0
$$
Equating terms of the same order,
$$
\begin{aligned}
A_0 \xi_1^0 &= 0   &&\mbox{at order $O(n^2)$}
\\
A_0 \xi_1^1 &= - (A_1 - r_1^1 I ) \xi_1^0 &&\mbox{at order $O(1)$}
\\
A_0 \xi_1^2 &= - (A_1 - r_1^1 I ) \xi_1^1 + r_1^2 \xi_1^0 \quad &&\mbox{at order $O(n^{-2})$} \, ,
\end{aligned}
$$
we compute
$$
\xi_1^0 = \begin{bmatrix} \mu  \\ -\lambda \\ 0  \end{bmatrix}
\, , \quad
\xi_1^1 = \begin{bmatrix} - \frac{\lambda}{\mu}  \\[5pt]  \frac{m^2 \lambda}{\kappa \mu}  \\[5pt] \frac{m \lambda}{\kappa}  \end{bmatrix}
$$

The associated solution of the system \eqref{ode2},
$$
\begin{bmatrix}
a \\ v \\ b
\end{bmatrix} (t)
=
\left ( \xi_1^0 + \frac{1}{n^2} \xi_1^1 + O\big ( \frac{1}{n^4} \big ) \right )  \exp  \left \{ -\frac{\lambda}{\mu} t + \frac{1}{n^2}   r_1^2 t + O \big ( \frac{1}{n^4} \big )  \right \} \, ,
$$
gives rise using \eqref{osc1} to oscillating solutions for \eqref{lintve1} which read
\begin{equation}
\label{oscsol1}
\begin{bmatrix}
u \\[5pt]  \frac{\del u}{\del t}  \\[5pt] \theta
\end{bmatrix} (t,x)
=
\begin{bmatrix} \tfrac{1}{n} a(t)  \\  \tfrac{1}{n} v(t)   \\ - i b(t) \end{bmatrix} 
e^{i n x}  
=
\begin{bmatrix}   \frac{1}{n} \mu - \frac{\lambda}{\mu} \frac{1}{n^3}  +  O ( \frac{1}{n^5} )  \\[5pt] -\frac{1}{n} \lambda +  \frac{m^2 \lambda}{\kappa \mu} \frac{1}{n^3} +  O ( \frac{1}{n^5} ) \\[5pt]  
- i  \frac{m \lambda}{\kappa} \frac{1}{n^2} + O ( \frac{1}{n^4} ) \end{bmatrix} 
\exp \Big \{ i n x -\frac{\lambda}{\mu} t +O \big (  \frac{1}{n^2} \Big )  t )  \Big \}
\end{equation}
Considering \eqref{oscsol1} we see that $u$, $\frac{\del u}{\del t}$, $\theta$ and $\frac{\del \theta}{\del x}$ converge to zero for $n$ large, but oscillations persist
for the strain $w = \frac{\del u}{\del x}$, the strain rate $\frac{\del v}{\del x}$ and $\frac{\del^2 \theta}{\del x^2}$. To leading order these solutions read
\begin{equation}
\label{oscsol2}
\begin{bmatrix}
w  \\[5pt]  \frac{\del v}{ \del x}  \\[5pt] \frac{\del^2 \theta}{\del x^2}
\end{bmatrix} (t,x) 
=
\begin{bmatrix}
\frac{\del u}{\del x}  \\[5pt]  \frac{\del^2 u}{\del t \del x}  \\[5pt] \frac{\del^2 \theta}{\del x^2}
\end{bmatrix} (t,x) 
=
i \, \begin{bmatrix}   \mu + O ( \frac{1}{n^2} )  \\[5pt] -  \lambda +   O ( \frac{1}{n^2} ) \\[5pt]  
  \frac{m \lambda}{\kappa} + O ( \frac{1}{n^2} ) \end{bmatrix} 
\left ( 1 +O \big (  \frac{1}{n^2}   \big ) t  \right ) \exp \Big \{ i n x -\frac{\lambda}{\mu} t  \Big \}
\end{equation}

\begin{remark}
A special case of \eqref{linate} is the system of adiabatic 1-$d$ linear thermoviscoelasticity when $\kappa = 0$,
\begin{equation}\label{lintvead2}
\begin{aligned}
\frac{\del w}{\del t}   &= \; \frac{\del v}{\del x}
\\
\frac{\del v}{\del t} &= \lambda \frac{\del w}{\del x} + m \frac{\del \theta}{\del x} + \mu \frac{\del^2 v }{\del x^2} 
\\
\frac{\del \theta}{\del t} &=  m \frac{\del v}{\del x } 
\end{aligned}
\end{equation}
We will see in section \ref{sec:ex2} that one may construct solutions of \eqref{lintvead2} for which 
oscillations appear to leading order for the vector function $(w, v_x, \theta)$. This should be contrasted to the behavior
of \eqref{lintve2} (which has $\kappa \ne 0$) where oscillations appear to  leading order for the vector function $(w, v_x, \theta_{xx})$, see \eqref{oscsol1}, \eqref{oscsol2}.
\end{remark}

\medskip
\subsection{An illustrating example} 
\label{sec:ex1d}
The special case of linear viscoelasticity (in one and several space dimensions) presents an example
that can be worked out explicitly. For the equation
\begin{equation}
\label{linearvisc}
u_{tt} = \lambda u_{xx} + \mu u_{t x x}  \qquad x \in (-\pi, \pi) \, , t > 0 \, ,
\end{equation}
where $u : (0,T)\times \Tone \to \R$,  we consider solutions of the form $u(t,x) = \frac{1}{n} \alpha(t) e^{ i n x}$.
(For the multi-dimensional case, see section \ref{sec:ex2}).
The amplitude $\alpha (t)$ satisfies
\begin{equation}
\label{exode1}
\frac{d^2\alpha}{d t^2} + \mu n^2 \frac{d \alpha}{d t} + n^2 \lambda \alpha = 0 \, .
\end{equation}
The associated characteristic polynomial  $\rho^2 + \mu n^2 + \lambda n^2 = 0$ has roots
$$
\rho_\pm = \frac{\mu n^2}{2} \left ( - 1 \pm \sqrt{ 1 - \frac{4 \lambda}{\mu^2 n^2}} \right )
$$
which are real for $n$ large, and enjoy the asymptotic expansions
$$
\begin{aligned}
\rho_-  &= - \mu n^2 + \frac{\lambda}{\mu} + \frac{\lambda^2}{\mu^3 n^2} + O \big ( \tfrac{1}{n^4} \big)
\\
\rho_+  &= - \frac{\lambda}{\mu} - \frac{\lambda^2}{\mu^3 n^2} + O \big ( \tfrac{1}{n^4} \big)
\end{aligned}
$$
They give rise to solutions $\alpha_\pm = e^{\rho_\pm t}$ for \eqref{exode1} and $u_{n , \pm} = \frac{1}{n}  e^{ i n x + \rho_\pm t}$ for \eqref{linearvisc}.
Of interest here is the solution associated to $\rho_+$ and we consider $(u_n, v_n)$ where 
\begin{equation}
\label{beh1}
u_{n}  (t,x) =  \frac{1}{n} \exp \left \{   i n x   - \frac{\lambda}{\mu} t  - \frac{\lambda^2}{\mu^3 n^2} t + O \big ( \tfrac{1}{n^4}   \big)  t   \right \}
\end{equation}
while $v_n = \frac{\del u_n }{\del t}$.   Note that $u_n \, ,  v_n \to 0$ as $n \to \infty$.  By contrast,
\begin{equation}
\label{beh2}
\begin{aligned}
\frac{\del u_n}{\del x}  &= i  \left ( 1 + O \big ( \frac{1}{n^2} \big ) \right ) \exp \left \{   i n x   - \frac{\lambda}{\mu} t   \right \}
\\
\frac{\del v_n}{\del x} &=  i  \left (  - \frac{\lambda}{\mu}  + O \big ( \frac{1}{n^2} \big ) \right ) \exp \left \{   i n x   - \frac{\lambda}{\mu} t   \right \}
\end{aligned}
\end{equation}
exhibit  persistent oscillations induced by the initial data.

To put this behavior into perspective, note that \eqref{linearvisc} can be expressed as the hyperbolic-parabolic system,
\begin{equation}
\label{exode2}
\begin{aligned}
\frac{\del w}{\del t}   &= \; \frac{\del v}{\del x}
\\
\frac{\del v}{\del t} &= \lambda \frac{\del w}{\del x} +  \mu \frac{\del^2 v }{\del x^2} 
\end{aligned}
\end{equation}
with $w = \frac{\del u}{\del x}$. 
The oscillation in \eqref{beh2} reflects a balance between the terms $w$ and $\frac{\del v}{\del x}$
and is stationary in time.

It should be contrasted with the usual oscillatory solutions of hyperbolic systems. Recall, that the hyperbolic model
\begin{equation}
\label{exode4}
\begin{aligned}
\frac{\del w}{\del t}   &= \; \frac{\del v}{\del x}
\\[3pt]
\frac{\del v}{\del t} &= \lambda \frac{\del w}{\del x} 
\end{aligned}
\end{equation}
has oscillatory solutions propagating in the directions of the wave speeds
$$
\begin{bmatrix}
w  \\[5pt]  v  
\end{bmatrix} (t,x) 
= \xi_\pm  \exp  \big \{  i n (x - \pm{\sqrt{\lambda}} t  )  \big \}
$$
where $\xi_\pm$ are the eigenvectors associated to the wave speeds $A \xi_\pm = \pm \sqrt{\lambda} \xi_\pm$. These propagate 
with speeds $\pm \sqrt{\lambda}$ and are of different nature than \eqref{beh1}, \eqref{beh2} which are  stationary in time.

\medskip
\subsection{Oscillations in linear multi-dimensional systems} \label{sec:oscmd}
Consider next the multi-dimensional system
\begin{equation}
\label{lintved}
\begin{aligned}
\frac{\del^2 u_k}{\del t^2} &= \frac{\del}{\del x_\alpha} \left ( A_{k l \alpha \beta} \frac{\del u_l }{\del x_\beta} + M_{k \alpha} \theta \right ) 
+ \mu \Delta \left ( \frac{\del u_k}{\del t} \right )
\\[5pt]
\frac{\del \theta}{\del t} &= \kappa  \Delta \theta + M_{k \alpha} \frac{\del}{\del x_\alpha} \left ( \frac{\del u_k}{\del t} \right )
\end{aligned}
\end{equation}
describing the evolution of a thermomechanical process $(u, \theta) : (0,\infty)\times \R^d \to \R^d \times \R^+$. Here, the summation convention is used, 
the parameters $\kappa > 0, \mu > 0$ describe the viscosity and thermal diffusivity, $A_{k l \alpha \beta}$ is a fourth order rank-1 convex tensor which
is symmetric in the sense $A_{k l  \alpha \beta} = A_{l k  \alpha \beta}$, $A_{k l \alpha \beta} = A_{k  l \beta \alpha}$ and describes the elastic response, 
while $M_{k \alpha}$ is a second order tensor describing the thermoelastic coupling.

Consider the {\it ansatz} of oscillating solutions of the form
\begin{equation}
\label{oscd}
\begin{aligned}
u_k (t, x) &= \frac{1}{n} a_k (t) e^{ i n \nu \cdot x}   \qquad k = 1, ... , d
\\
\theta(t,x) &= - i b (t) e^{ i  n \nu \cdot x}
\end{aligned}
\end{equation}
where $\nu = (\nu_1, ... , \nu_d)$ is a unit vector,  $| \nu | =1$, and n is a natural number. Introducing the ansatz \eqref{oscd}  to \eqref{lintved} we see
that  the amplitudes $a_k (t)$, $\beta (t)$ satisfy the system of ordinary differential equations
\begin{equation}
\label{oded}
\begin{aligned}
\ddot{a}_k  &= - n^2  (A_{k l \alpha \beta} \nu_\alpha \nu_\beta)  a_l + n^2 (M_{k \alpha} \nu_\alpha ) b- \mu n^2 \dot{a}_k
\\
\dot{b} &= - n^2 \kappa b - ( M_{k \alpha} \nu_\alpha ) \dot{a}_k
\end{aligned}
\end{equation}
Note that \eqref{oded} admits real valued solutions $(a_1, ... , a_d , b)$ which by taking real and imaginary parts in \eqref{oscd} give rise to 
real-valued oscillating solutions for \eqref{lintved}.

Recalling the summation convention is used, we proceed to solve the system \eqref{oded}. As already noted,
the fourth order tensor $A_{k l  \alpha \beta}$ is assumed symmetric and rank-1 convex:
\begin{align}
\label{hypsym}
A_{k l  \alpha \beta} = A_{l k  \alpha \beta} \,  , \quad &A_{k l \alpha \beta} = A_{k  l \beta \alpha} \, ,
\tag{H$_1$}
\\
\label{rank1cvx}
A_{k l  \alpha \beta}  \nu_\alpha \nu_\beta \xi_k \xi_l > 0 \quad &\forall \xi \in \R^d - \{0\} \, ,  \nu \in S^{d-1} \, .
\tag{H$_2$}
\end{align}
Hypothesis \eqref{rank1cvx} implies the acoustic tensor $Q_{k l} := A_{k l  \alpha \beta}  \nu_\alpha \nu_\beta $ is symmetric, 
positive definite. It thus has positive eigenvalues $\lambda^r > 0$, and a complete set of linearly independent eigenvectors $\xi^r$,
$r = 1, ... , d$, satisfying
\begin{equation}
\label{eigenv}
Q_{k l} \xi_l^r = \lambda^r \xi_k^r  \quad k = 1, ... , d \, \; \; \mbox{ normalized so that $|\xi^r| = 1$. }
\end{equation}
We now place an assumption on the thermoelastic interaction matrix $M_{k \alpha}$. Given $\xi^r$ we assume there is $\nu \in S^{d-1}$ 
such that for some $m \in \R$
\begin{equation}
\label{hypm}
M_{k \alpha } \nu_\alpha =  m \xi^r_k   \quad k = 1, ... d .
\tag{H$_3$}
\end{equation}
Note that $|\xi^r|=1$ implies $m^r = \xi_k M_{k \alpha} \nu_\alpha$.
Hypothesis \eqref{hypm} is automatically satisfied if $\text{rank} M = d$. Otherwise, it is an assumption connecting
pairs of eigenvectors $\xi^r$ to  associated propagation directions $\nu$.

With these in place, fix $r = 1, ... , d$ and proceed to solve \eqref{oded}. We make the ansatz
\begin{equation}
\label{anform}
a_k (t) = \alpha(t) \xi_k^r  \quad \mbox{ where  $\xi^r \in \R^d$, $|\xi^r | = 1$ and $\alpha(t)$ scalar valued}.
\end{equation}
Then  $(\alpha (t), b (t))$ satisfies the system of ordinary differential equations
\begin{equation}
\label{ode3}
\begin{aligned}
\ddot{\alpha} &=  - n^2 \lambda^r \alpha + n^2 m^r b - \mu n^2 \dot{\alpha}
\\
\dot{b} &= - \kappa n^2 b - m^r \dot{\alpha}
\end{aligned}
\end{equation}
where $m^r = \xi^r \cdot M \nu$, and $\lambda^r$,  $\xi^r$ are an eigenvalue and corresponding eigenvector of the acoustic tensor.
Conversely, \eqref{hypm}, \eqref{eigenv} imply solutions of \eqref{ode3} generate via \eqref{anform}  solutions of \eqref{oded}.

The system \eqref{ode3} is precisely \eqref{ode2} studied  in section \ref{sec:osc1d}.
Combining the analysis of subsection \ref{sec:osc1d} with \eqref{anform} we conclude that if \eqref{hypsym}, \eqref{rank1cvx} and \eqref{hypm} are satisfied 
for an eigenpair $(\lambda^r, \xi^r)$ and associated direction $\nu$, there will be oscillating solutions for \eqref{lintved}
of the form
\begin{equation}
\label{oscsold}
\begin{aligned}
\begin{bmatrix}
u \\[5pt]  \frac{\del u}{\del t}  \\[5pt] \theta
\end{bmatrix} (t,x)
&=
\begin{bmatrix} \tfrac{1}{n} \alpha(t) \xi^r  \\[5pt]  \tfrac{1}{n} \dot{\alpha} (t) \xi^r   \\[5pt]  - i b (t) \end{bmatrix} 
e^{i n \nu \cdot x}  
\\
&=
\begin{bmatrix}   \left ( \frac{1}{n} \mu + O \big ( \frac{1}{n^3}  \big ) \right ) \xi^r  \\[8pt] 
\left ( -\frac{1}{n} \lambda^r +  O ( \frac{1}{n^3} )  \right ) \xi^r \\[8pt]  
- i \frac{1}{n^2} \frac{m^r  \lambda^r }{\kappa} + O ( \frac{1}{n^4} ) \end{bmatrix} 
\exp \Big \{ i n \nu \cdot x -\frac{\lambda^r}{\mu} t +O \big (  \frac{1}{n^2} \Big )  t )  \Big \}
\end{aligned}
\end{equation}
where $(\alpha(t), b(t))$ satisfy \eqref{ode3}.
These provide oscillating progressive waves in the direction $\nu$. Again $u$, $v = \frac{\del u}{\del t}$
and $\theta$ converge to zero, $\nabla u$, $\nabla v$ and $\Delta \theta$ have persistent oscillations
\begin{equation}
\label{behexa2}
\begin{aligned}
\nabla u &=  i \mu  ( \xi^r \otimes \nu )  \left ( 1 +  O \big (  \frac{1}{n^2} \Big )  t \right ) \exp \Big \{ i n \nu \cdot x -\frac{\lambda^r}{\mu} t  \Big \}
\\
\Delta \theta &=  i  \frac{m^r  \lambda^r }{\kappa}   \left ( 1 +  O \big (  \frac{1}{n^2} \Big )  t \right ) \exp \Big \{ i n \nu \cdot x -\frac{\lambda^r}{\mu} t  \Big \}
\end{aligned}
\end{equation}
induced by oscillations in the initial data.

\subsection{Linear adiabatic thermoviscoelasticity}
\label{sec:ex2}
Another system where oscillatory solutions may be constructed explicitly is the case of adiabatic thermoviscoelastcity, namely
the special case of \eqref{lintved} when $\kappa = 0$, which reads
\begin{equation}
\label{linate}
\begin{aligned}
\frac{\del^2 u_k}{\del t^2} &= \frac{\del}{\del x_\alpha} \left ( A_{k l \alpha \beta} \frac{\del u_l }{\del x_\beta} + M_{k \alpha} \theta \right ) 
+ \mu \frac{\del}{\del x_\alpha} \frac{\del}{\del x_\alpha} \left ( \frac{\del u_k}{\del t} \right )
\\[5pt]
\frac{\del \theta}{\del t} &=  M_{k \alpha} \frac{\del}{\del x_\alpha} \left ( \frac{\del u_k}{\del t} \right )
\end{aligned}
\end{equation}
Introducing \eqref{oscd} leads to $(a_1, ... a_d , b)$ solving the system of ordinary differential equations \eqref{oded}
with $\kappa = 0$, To this end, we look for $a(t) = (a_1, ... a_d ) (t)$ solving 
\begin{equation}
\label{odeds}
\begin{aligned}
\ddot{a}_k  &= - n^2  (A_{k l \alpha \beta} \nu_\alpha \nu_\beta)  a_l -  n^2 (M_{k \alpha} \nu_\alpha ) ( M_{l \beta} \nu_\beta ) a_l  - \mu n^2 \dot{a}_k
\end{aligned}
\end{equation}
and then select
\begin{equation}
\label{eqb}
\begin{aligned}
b = - ( M_{k \alpha} \nu_\alpha ) a_k \, .
\end{aligned}
\end{equation}
The resulting $(a, b)$ will produce  via \eqref{oscd} solutions of \eqref{linate}.

We impose the hypotheses \eqref{hypsym}, \eqref{rank1cvx} of rank-1 convexity for $A_{k l \alpha \beta}$ (but not hypothesis \eqref{hypm}).
Fix a direction $\nu \in S^{d-1}$ and consider the modified acoustic tensor
$$
Q^m_{kl} = A_{k l \alpha \beta} \nu_\alpha \nu_\beta + (M_{k \alpha} \nu_\alpha ) ( M_{l \beta} \nu_\beta )
$$
By \eqref{rank1cvx} the matrix $Q^m_{kl}$ is positive definite. Let $\sigma^{ r} > 0$ be an eigenvalue and $\zeta^r$ the corresponding eigenvector,
$$
Q^m \zeta^r = \big ( Q + ( M \nu ) \otimes (M \nu) \big ) \zeta^r = \sigma^r \zeta^r \, , \quad | \zeta^r| =1  \, .
$$
Set $a_k = \alpha (t) \zeta^r_k$, then \eqref{odeds} is equivalent to solving the equation for the amplitude $\alpha (t)$
\begin{equation}
\label{exode3}
\frac{d^2\alpha}{d t^2} + \mu n^2 \frac{d \alpha}{\del t} + n^2 \sigma^r \alpha = 0 \, ,
\end{equation}
which is precisely the example studied in section \ref{sec:ex1d}.

The analysis in section \ref{sec:ex1d} indicates there are two solutions of \eqref{exode3} one that decays fast, and
a second denoted there by  $\alpha_+ (t) = e^{\rho_+ t}$ which decays slowly. Utilizing \eqref{oscd}, \eqref{eqb} the slowly decaying solution we conclude that \eqref{linate}
has solutions of the form
\begin{align}
u_k = \frac{1}{n} \alpha (t) \zeta_k^r e^{i n \nu \cdot x}  &= \frac{1}{n}  \zeta_k^r  \exp \left \{ i n \nu \cdot x - \frac{\sigma^r}{\mu}t + O \big ( \frac{1}{n^2} \big ) t \right \}
\\
\nonumber
\theta = i ( M_{k \alpha} \nu_\alpha  \zeta^r_k ) \alpha (t) e^{i n \nu \cdot x}  &= i ( M_{k \alpha} \nu_\alpha  \zeta^r_k ) \exp \left \{ i n \nu \cdot x - \frac{\sigma^r}{\mu}t + O \big ( \frac{1}{n^2} \big ) t \right \}
\\
\label{behtheta}
&= i (\zeta^r \cdot M \nu ) \Big ( 1 + O \big ( \frac{1}{n^2} \big )  t \Big )  \exp  \left \{ i n \nu \cdot x - \frac{\sigma^r}{\mu}t  \right \}
\end{align}
Note that  $u_n , v_n \to 0$ as $n \to \infty$ but oscillations persist for $\theta$,  
\begin{equation}
F = \nabla u = i ( \zeta^r \otimes \nu)   \Big ( 1 + O \big ( \frac{1}{n^2} \big )  t \Big )   \,  \exp  \left \{ i n \nu \cdot x - \frac{\sigma^r}{\mu}t  \right \}
\end{equation}
and  $\nabla v$.

In the above example there are oscillations of $\theta$ to leading order, see \eqref{behtheta}. This behavior should be contrasted to the example of section 
\ref{sec:oscmd} where, due to the presence of heat diffusion, oscillations first appear in $\Delta \theta$, see \eqref{behexa2}.


\section{Oscillations in quasi-linear dynamical models for phase transitions}\label{sec:sust}

We record some examples of solutions exhibiting sustained oscillations for various
quasi-linear hyperbolic-parabolic systems. The first class of models concerns viscoelastic systems of
the type used in modeling of phase transitions.

\subsection{ Time-dependent twinning solutions in nonlinear viscoelasticity}\label{sec:twin}

In preparation, we mention a well known result from continuum mechanics. Let $\psi(x)$ be a displacement field defined on
a reference configuration $\cR$ and suppose that $\cR$ is split into two subdomains, $\cR = \cR^+ \cup \cR^-$, by a smooth 
nonsingular hypersurface $\cS$ defined by
$$
f(x) = 0  \quad \mbox{with $\nabla f \ne 0$}.
$$
Suppose that $\psi$ enjoys the regularity $\psi$ is continuous on $\cR$, $\psi \in C^1\big(\overline{\cR^-}\big)$, $\psi \in C^1\big( \overline{\cR^+}\big)$,
and the limits
$$
\lim_{ \scriptsize\begin{matrix} x\to x_0 \\ x> x_0 \end{matrix}} \nabla \psi(x) = F^+ \, , \quad \lim_{\scriptsize \begin{matrix} x\to x_0 \\ x< x_0 \end{matrix} }\nabla \psi(x) = F^- \, ,
$$
exist for $x_0 \in \cS$ and are finite. Then the deformation gradients satisfy across $\cS$ the jump conditions
$$
F^+ - F^- = a \otimes N
$$
where $N$ is the normal to the surface and $a$ is an amplitude capturing the jump of the gradient in the normal direction at $\cS$. This result is the backbone 
in the  construction of twinning solutions in elasticity, \cite{James81, BJ87}.

Consider now the system of viscoelasticity of Kelvin-Voigt type,
\begin{align}\label{vekv}
&\partial_{tt}y-\dive  \Big (  \frac{\del W}{\del F} (\nabla y) + \nabla y_t  \Big ) = 0 \, .
\end{align}
It is written as a system of conservation laws expressed in coordinate form, for the quantities 
$v_i = \frac{\del y_i}{\del t}$, $F_{i \alpha} = \frac{\del y_i}{\del x_\alpha}$, by
\begin{equation}\label{vekv-coord}
\begin{aligned}
\del_t F_{i \alpha} &= \del_\alpha v_i
\\
\del_t v_i &= \del_\alpha \Big ( \frac{\del W}{\del F_{i \alpha}} (F) + \del_\alpha v_i \Big )
\\[3pt]
0 &= \del_\beta F_{i \alpha } - \del_\alpha F_{i \beta} \, .
\end{aligned}
\end{equation}
The last equation constrains $F$ to be a gradient. It is an involution, i.e. a constraint that propagates
from the initial data to solutions. 

Our purpose is to construct oscillating solutions. This will result from two ingredients, a special class of solutions
combined with conditions that guarantee joining these solutions across interfaces. We start with the interface conditions.

Suppose that $\cS = \{ (t, x) : f(x,t) = 0\}$ is a space-time hypersurface that splits a space time domain $Q_T = \cR \times (0,T)$ into two parts, 
$Q_T = \cR^+ \cup \cR^-$. We assume that $\cS$ is smooth and depicts the motion of an interface $x = x(t)$ which is moving in the direction of the normal
$\nu = \frac{\nabla_x f}{| \nabla_x f |}$ with speed $\dot x = s \nu $.  Since the interface $x(t)$ moves on the surface, we have $f(x(t), t) = 0$, which easily
yields for the interfacial speed that $s = - \frac{f_t}{| \nabla_x f|}$.  Suppose a solution $(v, F)$ of \eqref{vekv} 
that has smoothness $v \in C(Q_T)$, and such that $F, \nabla v \in  C \big(\overline{\cR^-}\big)$,  $F, \nabla v \in C \big( \overline{\cR^+}\big)$
and the right and left limits of $F, \nabla v$ exist and are finite on both sides of the space-time surface $\cS$.  The limits will satisfy the Rankine-Hugoniot conditions
\begin{align}
- s [ F_{i \alpha}] &= \nu_\alpha [v_i]
\label{rh1}
\\
- s [v_i] &= \nu_\alpha \left [ \frac{\del W}{\del F_{i \alpha}} (F) + \del_\alpha v_i \right ]
\label{rh2}
\\[3pt]
0 &= \nu_\beta [F_{i \alpha } ] - \nu_\alpha [F_{i \beta}]
\label{rh3}
\end{align}
where $[ \cdot ]$ denotes the jump across the interface.
Condition \eqref{rh3} implies that $[F] \tau = 0 $ for any vector $\tau \perp \nu$ and thus $[F] = a \otimes \nu$. Together with \eqref{rh1} they 
lead to
\begin{equation}\label{kinemjump}
[F] = a \otimes \nu \, , \quad [v] = - s a \, .
\end{equation}
We are here interested in $[F] \ne 0$ and $[v] = 0$. 
The latter precludes $\nabla v$ from having delta masses and allows the jump to be computed via \eqref{rh2}.
This suggests to consider standing shocks $s=0$ in conformance with the considerations of the one-dimensional case studied in \cite{Hoff86}.
In summary,  $s=0$ and $[F] \ne 0$ have to satisfy
\begin{equation}\label{steadysh}
[F] = a \otimes \nu \, , \quad   \big [  T \nu \big ] = \Big [  \Big ( \frac{\del W}{\del F} (F) + \nabla v \Big ) \nu \Big ] = 0
\end{equation}
where $T =  \frac{\del W}{\del F} (F) + \nabla v $ is the total stress. Observe that \eqref{steadysh} requires that the steady interface is in equilibrium.

The second ingredient is special solutions of \eqref{vekv} of the particular form $y(t,x) = F(t) x$ with $F(t)\in \R^{d \times d} $ a time dependent matrix.
One checks that $F(t)$ satisfies $\ddot F = 0$, which implies that the solution of \eqref{vekv} must be of the form
\begin{equation}\label{usmd}
y(t,x) = ( t F_0 +  F_1) x  \, , \quad F_0, F_1 \in \R^{d \times d} \, ,
\end{equation}
where $F_0$, $F_1$ are constant matrices. They describe uniform shear and are
universal in the sense that they are independent of the form of the stored energy $W(F)$.

We use \eqref{kinemjump} and \eqref{steadysh} to construct some special solutions of \eqref{vekv-coord}, which are in turn used to construct 
oscillatory solutions for the same system.

\subsubsection{Steady solutions with jumps in the deformation gradient in elasticity} \label{sec:steadyjump}
The first solution is known from work of Ball and James \cite{BJ87} and
provides solutions with discontinuities in the deformation gradient for the equations of elasticity. The solutions are steady and solve \eqref{vekv-coord}.
Namely, let $F_-$, $ F_+$ be such that $F_+ - F_- = a \otimes \nu$ with $a \in \R^d$, $\nu \in \mathcal{S}^{d-1}$. The hyperplane $\nu \cdot x =0$ passes 
through the origin and is orthogonal to the unit vector $\nu$.  The function
\begin{equation}
y (t, x) = 
\begin{cases} 
F_-  x  & x \cdot \nu < 0 \\
F_+ x &  x \cdot \nu > 0 \\
\end{cases}
\end{equation}
satisfies $v(t,x) = 0$ and is a steady solution of 
\begin{equation}\label{vekv-coord-el}
\begin{aligned}
\del_t F_{i \alpha} &= \del_\alpha v_i
\\
\del_t v_i &= \del_\alpha \Big ( \frac{\del W}{\del F_{i \alpha}} (F) \Big )
\\[3pt]
0 &= \del_\beta F_{i \alpha } - \del_\alpha F_{i \beta}
\end{aligned}
\end{equation}
and also of \eqref{vekv-coord} provided
$$
F_+ - F_- = a \otimes \nu  \, , \qquad 
[  T \nu \big ] = \Big (  \frac{\del W}{\del F} (F_+) - \frac{\del W}{\del F} (F_-) \Big )   \nu = 0 \, .
$$
This solution  has discontinuous deformation gradient across an interface, and  is related to twinning \cite{James81}. The position of
the matrices $F_-$, $F_+$ can be interchanged and leads ato a different weak solution. Combining these together can lead to a persistent steady
oscillatory structure that is related to phase transitions, \cite{BJ87}.

\subsubsection{Dynamic solutions with jumps in deformation gradient and strain rate in viscoelasticity}\label{sec:vejump}
Next consider $F_0$ a baseline deformation gradient,  $F_- = F_0 + a \otimes \nu$, $F_+ = F_0 + b \otimes \nu$,
where $a$, $b \in \R^d$, $\nu \in \mathcal{S}^{d-1}$. Then $F_+ - F_- = (b-a) \otimes \nu$. The hyperplane $x \cdot \nu = 0$ passes through the origin.
Consider the motion
\begin{equation}\label{basicsol}
\begin{aligned}
y (t, x) &= 
\begin{cases} 
y_- (t,x)  =  t \big ( F_0 + a \otimes \nu \big ) x   & \quad  x \cdot \nu < 0 \\
y_+ (t,x)  =  t \big ( F_0 + b \otimes \nu \big ) x  &  \quad x \cdot \nu > 0 \\
\end{cases}
\\[5pt]
v (t, x) &= 
\begin{cases} 
 \big ( F_0 + a \otimes \nu \big ) x   &\quad  x \cdot \nu < 0 \\
 \big ( F_0 + b \otimes \nu \big ) x  &  \quad x \cdot \nu > 0 \\
\end{cases}
\end{aligned}
\end{equation}
Note that $y$ and $v$ are both continuous across the interface $x \cdot \nu = 0$  which is steady in time.  The deformation gradient $\nabla y$ and stretch tensor $\nabla v$
have discontinuities across that interface.  Moreover,  by virtue of \eqref{steadysh},  $y$ will be a weak solution of  \eqref{vekv-coord} on the domain 
$[1,2] \times \R^d$ provided
\begin{equation}\label{cond}
\Big (  \frac{\del W}{\del F} ( t F_- + t (b-a) \otimes \nu ) - \frac{\del W}{\del F} ( t F_-)  + (b - a) \otimes \nu \Big )   \nu = 0 \, ,
\quad \mbox{ for $t \in [1,2]$},
\tag{C}
\end{equation}
is satisfied. The reader should note that under \eqref{cond}  the role of $a$ and $b$ (or $F_-$ and $F_+$) can be interchanged 
so as to obtain a second solution with discontinuities in the deformation gradient $\nabla y$ and stretching $\nabla v$.
\begin{figure}[htbp] 
\begin{tikzpicture}[scale=1.8]
\def\ang{-10} 
\def\th{.4} 

\draw[->] (-2,0) -- (3.5,0) node[below, font=\normalsize] {$x_1$}; 
\draw[->] (0,-2) -- (0,3) node[right, font=\normalsize] {$x_2$}; 

\begin{scope}[rotate=\ang] 

\foreach \x in {-2,-1,...,2} { 
  \draw (\x,-1.5) -- (\x,0) node[point]{} -- (\x,2.5) 
    (\x+\th,-1.5) -- (\x+\th,0) node[point]{} -- (\x+\th,2.5); 
  \path (\x+\th/2,1) node{$tF_-$}  (\x+1/2+\th/2,1) node{$tF_+$}; 
  \path[every node/.style={below, fill=white, inner sep=2pt, outer sep=3pt, fill opacity=.85, text opacity=1.}] 
    (\x,0) node[rotate=\ang]{$\x\mathstrut$ } 
    (\x+\th,0) node[rotate=\ang]{\ifnum \x=0 $\theta\mathstrut$ \else $\x{+}\theta\mathstrut$ \fi}; 
}; 
\draw (3,-1.5) -- (3,0) node[point]{} 
  node[below, fill=white, inner sep=2pt, outer sep=3pt, fill opacity=.85, text opacity=1., rotate=\ang]{$3\mathstrut$} -- (3,2.5); 

\path[above] 
(0,2.5) node[yshift=1mm]{$x\cdot\nu=0$} 
(\th,2.5) node{$x\cdot\nu=\theta$} 
(1,2.5) node{$x\cdot\nu=1$} 
(2,2.5) node{$x\cdot\nu=2$}; 

\draw[->, very thick] (0,0) -- (1,0) node[above left]{$\nu$}; 
\draw[dashed] (-2.25,0) -- (3.5,0); 

\end{scope} 
\end{tikzpicture}  
\caption{Deformation gradient for oscillating solutions, $F_+ = F_0+a\otimes\nu$, $F_- = F_0+b\otimes\nu$.} \label{Figoscillation}
\end{figure}
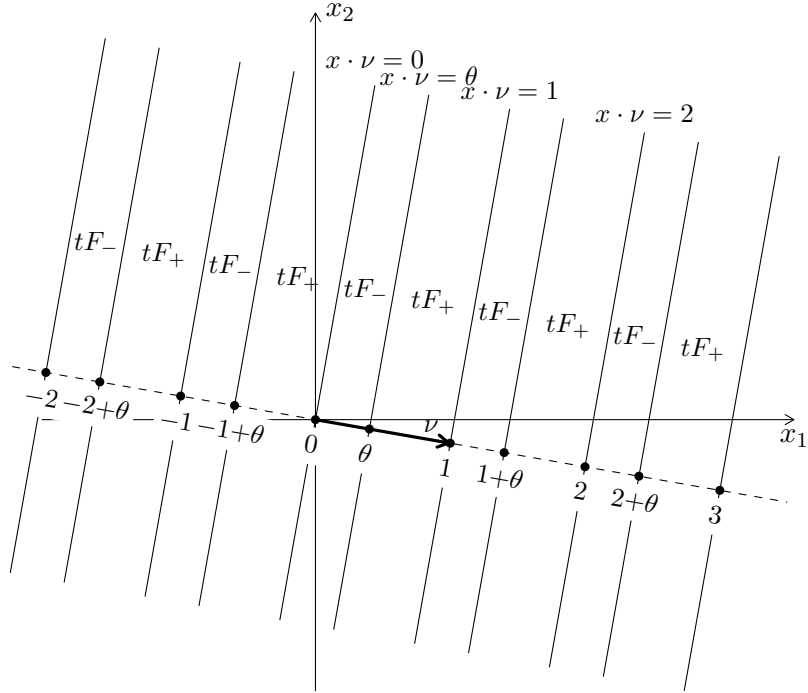 

Suppose that the condition \eqref{cond} is satisfied. By interlacing the solution \eqref{basicsol} with the corresponding solution with the position
of $F_-$ and $F_+$ interchanged, we construct an oscillating solution defined on $[1,2] \times \R^d$ that jumps across the steady interfaces 
$x \cdot \nu = k$ and $x \cdot \nu = k + \theta$, $k \in \mathbb{Z}$. The deformation gradient and stretching are given respectively by
\begin{equation}
\begin{aligned}
\nabla y (t,x) &= \begin{cases} t  ( F_0 + a \otimes \nu )   & \quad   k< x \cdot \nu < k + \theta  \\
                                              t ( F_0 + b \otimes \nu  )   &  \quad  k + \theta < x \cdot \nu < k + 1  \\
\end{cases}   \qquad k \in \mathbb{Z}
\\[5pt]
\nabla v (t,x) &= \begin{cases}  F_0 + a \otimes \nu    & \quad   k< x \cdot \nu < k + \theta  \\
                                                F_0 + b \otimes \nu   &  \quad  k + \theta < x \cdot \nu < k + 1  \\
\end{cases}   \qquad k \in \mathbb{Z}
\end{aligned}
\end{equation}
and the deformation gradient is presented in Figure \ref{Figoscillation}.

A commonly used notion in elasticity theory is rank-one convexity. Recall that $W(F)$ is rank-one convex on the domain $D$ if for any
$F \in D$ 
\begin{equation}\label{ROC}
\sum_{i,j, \alpha, \beta} \frac{\del^2 W}{\del F_{i \alpha} \del F_{j \beta}} (F) \;  \nu_\alpha \nu_\beta \; \xi_i \xi_j  > 0 \, ,
\quad   \xi \in \R^d  - \{0\} \, , \, \nu \in \mathcal{S}^{d-1}  \, .
\tag {ROC}
\end{equation}
Observe that :

\begin{lemma}
If condition \eqref{cond} is satisfied then \eqref{ROC} is violated
\end{lemma}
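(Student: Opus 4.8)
The plan is to derive a contradiction between the compatibility condition \eqref{cond} and rank-one convexity \eqref{ROC} by viewing \eqref{cond} as a statement about the function $t \mapsto \frac{\del W}{\del F}(tF_- + t(b-a)\otimes\nu)\nu - \frac{\del W}{\del F}(tF_-)\nu$ along a rank-one line, and then differentiating in the parameter.

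First I would fix notation: write $a' = b-a$, so that the two states are $tF_-$ and $tF_- + t\,a'\otimes\nu$, both lying on the rank-one segment $s \mapsto tF_- + s\,a'\otimes\nu$. Condition \eqref{cond} says precisely that
\begin{equation*}
\Big( \tfrac{\del W}{\del F}(tF_- + t\,a'\otimes\nu) - \tfrac{\del W}{\del F}(tF_-) \Big)\nu = -\,a'\otimes\nu \cdot \nu = -\,a' \, ,
\end{equation*}
using $|\nu|=1$. Now introduce $g(s) := \frac{\del W}{\del F}(tF_- + s\,a'\otimes\nu)\,\nu$, a curve in $\R^d$. Then \eqref{cond} reads $g(t) - g(0) = -a'$, i.e. $\int_0^t g'(s)\,ds = -a'$. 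Computing the derivative, $g'(s)_i = \sum_{j,\alpha,\beta}\frac{\del^2 W}{\del F_{i\alpha}\del F_{j\beta}}(tF_- + s\,a'\otimes\nu)\,a'_j\nu_\beta\nu_\alpha$, and contracting with $a'$ gives $a'\cdot g'(s) = \sum \frac{\del^2 W}{\del F_{i\alpha}\del F_{j\beta}}\,\nu_\alpha\nu_\beta\,a'_i a'_j$, which is the rank-one form evaluated at $\xi = a'$. If \eqref{ROC} held on the relevant domain, this would be strictly positive for all $s$ (assuming $a' = b - a \ne 0$), hence $\int_0^t a'\cdot g'(s)\,ds > 0$ for $t \in [1,2]$; but \eqref{cond} forces this integral to equal $a'\cdot(-a') = -|a'|^2 < 0$, a contradiction.

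The main obstacle — really the only subtle point — is the case $a' = b - a = 0$, for which the argument above is vacuous. I would handle this by observing that if $a = b$ then $F_- = F_+$ and the two "alternating" states coincide, so the construction produces no genuine oscillation and the statement is understood to concern the nontrivial case $b \ne a$; alternatively, one notes that with $a = b$ condition \eqref{cond} reduces to $(b-a)\otimes\nu\cdot\nu = 0$, which is automatic and imposes nothing, so there is nothing to contradict and the lemma is stated for the oscillatory regime where $[F] = (b-a)\otimes\nu \ne 0$. A secondary point to state cleanly is \emph{where} \eqref{ROC} is assumed to hold: the contradiction only needs rank-one convexity along the segment $\{tF_- + s\,a'\otimes\nu : s\in[0,t],\ t\in[1,2]\}$, so the conclusion is that \eqref{ROC} must fail somewhere on this segment — this is the precise meaning of "\eqref{ROC} is violated" in the statement, and I would phrase the proof so that this is transparent.
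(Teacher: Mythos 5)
Your proof is correct and follows essentially the same route as the paper: both write the stress difference in \eqref{cond} as an integral of $\frac{\del^2 W}{\del F_{i\alpha}\del F_{j\beta}}\nu_\alpha\nu_\beta$ contracted against $b-a$ along the rank-one segment, and then use \eqref{ROC} to force the sign contradiction $0 > |b-a|^2$. Your explicit remarks on the degenerate case $b=a$ and on the precise domain where \eqref{ROC} must fail are sensible refinements but do not change the argument.
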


\begin{proof}
Indeed, using the summation convention, we express \eqref{cond} as
$$
\begin{aligned}
0 &= \Big (  \frac{\del W}{\del F_{i \alpha}} ( t F_- + t (b-a) \otimes \nu ) - \frac{\del W}{\del F_{i \alpha}} ( t F_-)  + (b - a)_i \nu_\alpha  \Big )   \nu_\alpha 
\\
&=  \left ( \int_0^1 \frac{d}{ds} \frac{\del W}{\del F_{i \alpha}} ( t F_- + s  t (b-a) \otimes \nu ) \, ds \right ) \nu_\alpha  + (b-a)_i
\\
&= t  \left ( \int_0^1  \frac{\del^2 W}{\del F_{i \alpha} \del F_{j \beta} } ( t F_- + s  t (b-a) \otimes \nu ) \, ds \right ) (b-a)_j \nu_\alpha \nu_\beta + (b-a)_i
\end{aligned}
$$
IF $W(F)$ is rank-one convex on the domain $D$ containing $t F_- + s  t (b-a) \otimes \nu$  then  we obtain $0 > |b - a|^2$ leading to a contradiction.
\end{proof}

\subsection{Longitudinal motions of a viscoelastic bar}\label{sec:1dnonmon}

We outline a one-dimensional example which is a variant of the example in \cite[sec 7]{KLST23}
and concerns the system describing one-dimensional longitudinal motions of a viscoelastic bar.
The system 
\begin{equation}
\label{longonev}
\begin{aligned}
u_t &= v_x
\\
v_t &= \sigma(u)_x +  \del_x \left ( \frac{\mu}{u} v_{x} \right )
\end{aligned}
\end{equation}
describes longitudinal motions  $y(t,x) :(0,T) \times [0,1] \to \R$ for a bar, where $u = y_x > 0$ is
the longitudinal strain and $v= y_t$ the velocity. The total stress 
$$
S = \sigma (y_x) +    \frac{\mu }{y_x} y_{t x}
$$ 
has an elastic and a viscous component, with the viscosity in Lagrangian coordinates expressed as $\frac{\mu}{u}$.
Here we take $\mu = 1$ and the function $\sigma (u)$ is smooth and  non-monotone. Such models have
been used for modeling phase transitions.

Assume two positive states $0 < a < b$ are fixed and suppose the stress function $\sigma (u)$
satisfies
\begin{equation}
\label{condnonm}
 \sigma (\tau  a) = \sigma ( \tau  b) \qquad \mbox{ for $\tau \in [1,2]$} \, .
\end{equation}
Clearly, \eqref{condnonm} requires that $\sigma(u)$ is nonmonotone.
The example detailed below is based on two properties of the system \eqref{longonev}:
\begin{itemize}
\item[(i)]  It admits special solutions of the form
\begin{equation}\label{unifshear}
\bar u (t) = \kappa t \, , \quad \bar v (x) = \kappa x
\end{equation}
where $\kappa > 0$ is a rate of stretching. These solutions are universal in the sense that they satisfy
\eqref{longonev} for any function $\sigma (u)$.

\item[(ii)]  \eqref{longonev} admits piecewise smooth solutions that are 
continuous in $v$ but discontinuous in $u$ and $v_x$, see \cite{Hoff86}, provided the Rankine-Hugoniot conditions 
$$
\begin{aligned}
-s [u] &= [v]
\\
- s [v] &= \left [ \sigma (u) +\frac{v_x}{u} \right ]
\end{aligned}
$$
are satisfied, where $s$ is the shock speed and $[q] = q_+ - q_-$ the jump of the quantity $q$. As $v$ is continuous, 
the shocks are stationary $s=0$ and $[u] \ne 0$ has to satisfy
\begin{equation}\label{sec7RH}
s= 0 \, , \quad \left [ \sigma (u) + \frac{u_t }{u} \right ] = 0 \, .
\end{equation}
\end{itemize}

We construct a family of periodic solutions to \eqref{longonev} defined for $(t,x) \in [1,2]\times \R$. Fix states $a, b$ satisfying 
$0 < a < 2a < b < 2b$ and suppose that \eqref{condnonm} is satisfied. We denote by $S(t)$ the common value
\begin{equation*}
S(t) := \frac{1}{t a} a  + \sigma (t a) = \frac{1}{t b} b  + \sigma (t b)  \, ,  \qquad 1 \le t \le 2 \, .
\end{equation*}
Next, fix $0 < \theta < 1$ and define the periodic function
\begin{equation}\label{perU}
U(t,x)  := \begin{cases}
 a \,  t   &  \quad  \; \;  k <  x <  k + \theta
 \\
 b \,  t   &  k + \theta < x < k + 1
 \end{cases}
\quad \; k \in \mathbb{Z}  
\end{equation}
Finally, let $c_\theta = \theta a + (1-\theta) b$ and set
\begin{equation}
\label{perY}
\begin{aligned}
Y(t,x) = \int_0^x U (t, y) dy &= 
\begin{cases}
k c_\theta t + (x-k) a t  & \quad  \; \;  k <  x <  k + \theta
\\
 k c_\theta t + \theta a t + \big ( x - k - \theta) b t   &  k + \theta < x < k + 1
\end{cases} \, , \quad k \in \mathbb{Z} \, ,
\\
V(t,x) = \del_t Y (t,x) &=
\begin{cases}
k c_\theta  + (x-k) a   & \quad  \; \;  k <  x <  k + \theta
\\
 k c_\theta  + \theta a  + \big ( x - k - \theta) b    &  k + \theta < x < k + 1
\end{cases} \, , \quad k \in \mathbb{Z} \, .
\end{aligned}
\end{equation}
Then $(U, V)$ is a weak solution of \eqref{longonev} on $ [1,2]\times\R$.
It satisfies the equations in a classical sense on $(1,2)\times (k, k + \theta)$, $(1,2)\times(k + \theta , k+1)$ and 
the Rankine-Hugoniot conditions \eqref{sec7RH} at the interfaces $x = k$ and  $x = k + \theta$ for $1 \le t \le 2$.

The function $Y(t,x)$ is then rescaled and restricted to the interval $(t,x) \in  Q = (1,2)\times (-1,1)$ to define
\begin{equation}
\label{exsoln3}
y_n (t,x) = \frac{1}{n} Y(t, nx)  \, \quad u_n = \del_x y_n = U  (t, nx)  \, \quad {v_n} = \del_t y_n  =  \frac{1}{n}  V (t, nx)
\end{equation}
One checks that $(u_n , v_n )$ is a weak solution of \eqref{longonev} which is stationary for the momentum equation.
Moreover, one easily computes the limits
$$
\begin{aligned}
u_n  &\rightharpoonup  (a\theta + b(1-\theta) ) t  \quad \mbox{ weakly-$\star$ in  $L^\infty \big ( Q \big )$ }
\\
\del_x v_n &\rightharpoonup  (a\theta + b(1-\theta) )  \quad \mbox{ weakly-$\star$ in  $L^\infty \big ( Q \big )$ }
\\
v_n &\to (a\theta + b(1-\theta)) x  \quad \mbox{ strongly in $L^2 \big ( Q \big )$ }
\end{aligned}
$$
and 
$$
\sigma (u_n) \rightharpoonup  \theta \sigma ( at) + (1-\theta) \sigma(bt) \ne \sigma \big (  \theta a t + (1-\theta) b t) \, ,
$$
weak-$\star$ in $L^\infty(Q)$. The oscillations in solutions are characterized by the Young measure 
$$\nu = \theta \delta_{at} + (1-\theta) \delta_{b t}$$
and are induced by oscillations in the initial data $u_n (1,x)$ and $\del_x v_n (1, x)$.

\subsection{A class of nonlinear models describing  viscoplastic response}\label{sec:mdvisco}
Consider next the equation
$$
y_{t t} = \big ( \sigma (y_x , y_{x t} ) \big )_x 
$$
which may be expressed as a system in the form
\begin{equation}
\label{viscopl}
\begin{aligned}
u_t &= v_x
\\
v_t &=  \del_x \big ( \sigma(u, v_x ) \big )
\end{aligned}
\end{equation}
Such systems have been used as models of viscoplasticity describing high strain-rate deformations
and are hyperbolic parabolic under the assumption $\sigma_q (p, q) > 0$.

This system has analogous properties to the example of section \ref{sec:1dnonmon}, namely:
(i)  It admits uniform shearing solutions \eqref{unifshear} for any $\kappa \in \R$
independently of the form of the function $\sigma (p, q)$.
(ii) It can have  piecewise smooth solutions which satisfy the system in regions separated by stationary  interfaces,
and across the interfaces they satisfy  $[v] = 0$, $[u] \ne 0$ and $[v_x] \ne 0$ and the Rankine-Hugoniot jump conditions
\begin{equation}\label{sec7bRH}
s= 0 \, , \quad \left [ \sigma (u, v_x)  \right ] = 0 \, .
\end{equation}

To continue, let $a, b$ be two fixed states satisfying $0 < a < 2 a < b < 2 b$ and suppose that 
\begin{equation}\label{hypvp}
\sigma ( \tau a , a) = \sigma (\tau b , b ) \quad \mbox{for $\tau \in [1, 2]$} \, .
\end{equation}
We may then define periodic solutions  $Y(t, x)$ and $(U,V)$  for  \eqref{viscopl} defined by \eqref{perU}, \eqref{perY} on the domain
$(t, x) \in (1,2) \times \R$. Then $\{ y_n \}$ defined via the rescaling \eqref{exsoln3} produces an oscillatory solution of \eqref{viscopl}
with sustained oscillations induced by the initial data.

To see what the hypothesis \eqref{hypvp} signifies, consider the special case $\sigma (p, q) = \varphi (p) q^n$ where $n > 0$ exponent and $\varphi (u) > 0$.
This satisfies $\sigma_q (p,q) > 0$ so the system \eqref{viscopl} falls within the class of hyperbolic-parabolic systems. For this class \eqref{hypvp} dictates
$$
\varphi (\tau a) = \varphi (\tau b) \Big ( \frac{b}{a}\Big )^n   \quad \tau \in [1,2]
$$
and one can select $\varphi (u)$ that satisfies that for $a,b$ as above.


\section{ Quasilinear systems in gas dynamics for a viscous adiabatic gas}\label{sec:1dthermoel}
The system of compressible gas dynamics in one space dimension has the form
\begin{equation}
\label{gasdyn}
\begin{aligned}
u_t - v_x &= 0
\\
v_t - \sigma(u,\theta)_x &= 0
\\
\big ( \tfrac{1}{2} v^2 + e(u, \theta) \big )_t - ( \sigma(u,\theta) \, v )_x 
&= 0
\end{aligned}
\end{equation}
The constitutive theory is determined by a free energy function $\psi = \psi (u, \theta)$ via the formulas
\begin{equation}
\label{const}
\sigma = \frac{ \del \psi}{\del u} \, , \quad \eta = - \frac{ \del \psi}{\del \theta}  \, , \quad e = \psi + \theta \, \eta
\end{equation}
and satisfies the Maxwell relations
\begin{equation}
\label{maxwell}
\sigma_\theta =  - \eta_u  \, , \quad  e_\theta = \theta \eta_\theta \, , \quad e_u = \sigma - \theta \sigma_\theta \, .
\end{equation}
The relations \eqref{const} are motivated from thermodynamics and (are designed to) induce an additional conservation equation
expressing conservation of the entropy $\eta(u,\theta)$ 
for smooth solutions of  \eqref{gasdyn},
$$
\del_t \eta(u,\theta) = 0 \, .
$$
The system \eqref{gasdyn}  may be written as a system of conservation laws 
$$
\del_t A(U) + \del_x F(U) = 0
$$
where
\begin{equation*}
\begin{split}
U  =  
\begin{pmatrix}
u \\ v \\ \theta
\end{pmatrix}
\quad
A(U) =
\begin{pmatrix}
u \\ v \\ \tfrac{1}{2} v^2 + e(u,\theta)
\end{pmatrix}
\quad
F(U) =
- \begin{pmatrix}
v \\ \sigma (u, \theta)  \\ v \, \sigma(u, \theta)
\end{pmatrix}
\end{split}
\end{equation*}
It is hyperbolic if
\begin{equation}\label{hyperb}
e_\theta > 0 \, ,  \quad \sigma_u > 0 \, ,  \quad \eta_\theta > 0 \, .
\tag{H}
\end{equation}
with the wave speeds computed by $\lambda_0 = 0$ and $\lambda_\pm = \pm \sqrt{ \sigma_u + \tfrac{\sigma_\theta^2}{\eta_\theta}}$.
The condition of hyperbolicity \eqref{hyperb} is equivalent to the hypothesis $\psi_{uu} > 0$ and $\psi_{\theta \theta} < 0$.

We will consider the one-dimensional hyperbolic-parabolic system 
\begin{equation}
\label{vhcg}
\begin{aligned}
u_t - v_x &= 0
\\
v_t - \sigma(u,\theta)_x &= ( \frac{\mu}{u} v_x )_x 
\\
\big ( \tfrac{1}{2} v^2 + e(u, \theta) \big )_t - \Big ( \sigma(u,\theta) \, v \Big )_x 
&= \Big ( \frac{\mu}{u} \,  v \, v_x \Big )_x +  \Big ( \frac{\kappa}{u} \theta_x \Big )_x  \, .
\end{aligned}
\end{equation}
In this model $u$ stands for the specific volume (the inverse of the density), $v$ is the longitudinal velocity,
and $\theta$ the temperature,  while the internal energy $e$ and  the stress $\sigma$ are determined via constitutive relations \eqref{const}, \eqref{maxwell};
in this interpretation $u > 0$ and $\theta > 0$. 
In \eqref{vhcg} we adopted a Stokes law for the viscous stress
$$
\tau_{viscous} = \mu (u,\theta) \frac{1}{u} v_x \quad \mbox{ with $\mu (u,\theta) \ge 0$},
$$
and a Fourier law for the heat conduction,
$$
Q = \kappa(u, \theta) \frac{1}{u}   \theta_x \quad \mbox{ with $\kappa (u,\theta) \ge 0$}.
$$
The form of $\frac{\mu}{u}$ for the viscosity and $\frac{\kappa}{u}$ for the heat conductivity 
is due to the interpretation of the model in Lagrangian coordinates.
The theory of gas dynamics for viscous and heat-conducting gases satisfies the entropy production identity
\begin{equation}
\label{entropyp}
\del_t \eta (u,\theta) - \del_x \left ( \frac{Q}{\theta} \right ) = \frac{\mu}{u} \frac{v_x^2}{\theta} + \frac{\kappa}{u}  \left ( \frac{\theta_x}{\theta} \right )^2  \ge 0 \, .
\end{equation}

Another interpretation of  \eqref{vhcg}
is to describe  one dimensional shear motions of a thermoviscoelastic materials.  In this case $u$ will be the shear strain, 
$v$ the velocity in the shear direction, $\theta > 0$ the temperature; 
in this interpretation $u$ does not obey a positivity constraint, while the viscous stress will be $\tau = \mu v_x$, the heat conductivity 
$q = \kappa \theta_x$, and \eqref{entropyp} has to be adapted accordingly.

Our objective is to construct a periodic solution of \eqref{vhcg}, defined for $1 \le t \le 2$ and $x \in \R$ periodic with period $p =1$, in analogy to the construction of section \ref{sec:1dnonmon}.  This by rescaling will produce a weakly convergent sequence of exact solutions to \eqref{vhcg} that will converge weakly but not strongly.
The construction will be done for $\mu$ constant and for $\kappa = 0$, that is for a {\it viscous adiabatic gas}, and will be based on combining uniform extension solutions with 
functions that exhibit jumps on the temperature, strain, and strain-rate but with continuous velocities.

\subsection{Uniform extension solutions}
Consider \eqref{vhcg} with $\mu$ and $\kappa$ both constant with $\mu > 0$ and $\kappa \ge 0$. We study a special class of exact solutions
$(\bar y (x,t) , \bar \theta (t) )$ which are required to be of the form
\begin{equation}\label{us-tve}
\bar y (x,t ) = a x t \, , \qquad  \bar \theta = \bar \theta (t )
\end{equation}
where $a > 0$ is a constant and $\bar \theta(t)$ depends on time only. In this case
$$
\bar v (x, t) = a x \, , \quad \bar u (x,t) = \bar u (t) = a t 
$$
Such a solution describes uniform extension of the gas when $at > 1$ or compression when $a t < 1$. One easily checks that \eqref{us-tve}
solves \eqref{vhcg} if  $\bar \theta (t) = \bar \theta (t ; a, \varphi)$ solves tha initial value problem
\begin{equation}\label{ivp-us}
\left \{
\begin{aligned}
\bar \theta \del_t \eta ( \bar u , \bar \theta ) &= \frac{\mu}{\bar u}  (\bar u_t )^2
\\
\; \; \bar \theta (t =1  ; a, \varphi) &= \varphi
\end{aligned}
\right .
\end{equation}
where $\varphi > 0$ depicts the temperature at time $t = 1$ which is assumed constant. The latter equation follows from \eqref{entropyp} which
is equivalent to \eqref{vhcg}$_3$ for smooth processes.

In the sequel we employ a constitutive theory for the gas 
\begin{equation}\label{assform}
\psi (u, \theta) = \theta \int_1^u \tau (s) \, ds - \big ( \theta \ln \theta - \theta \big )
\tag {CF}
\end{equation}
whence
$$
\sigma = \theta \tau (u)  \, , \quad \eta (u, \theta) = \ln \theta - W (u)
$$
with $W(u) = \int_1^u \tau (s) \, ds $.  Then, the initial value problem \eqref{ivp-us} reduces to asking that 
$\bar \theta (t ; a, \varphi)$ solves the problem
\begin{equation}\label{ivp-uss}
\left \{
\begin{aligned}
\frac{d \bar \theta}{dt} - a  \, \tau (at) \bar \theta  &= \frac{\mu}{\bar u}  (\bar u_t )^2 = \frac{\mu a}{t}
\\
\; \; \bar \theta (t =1 ) &= \varphi
\end{aligned}
\right .
\end{equation}
The solution in the interval of interest $t \in [1,2]$ has the explicit form
$$
\bar \theta (t ; a, \varphi) = \varphi e^{W(at) - W(a)} + e^{W(at)} \int_1^t \frac{\mu a}{s} e^{- W(as)} \, ds
$$
Observe that since $\mu, a, \varphi > 0$ the function $\bar \theta (t) > 0$ as well.

\subsection{Discontinuous Solutions}
Next consider the problem \eqref{vhcg} now under adiabatic conditions $\kappa = 0$ and we look for solutions which across
an interface are continuous in velocity, $[ v ] = 0$ but have jump discontinuities $[ \theta] \ne 0$, $[u ] \ne 0$ and $[ v_x] \ne 0$.
The Rankine-Hugoniot conditions,
$$
\begin{aligned}
- s [u] &= [v] 
\\
- s [v] &= [ \sigma (u,\theta) + \frac{\mu}{u} v_x ]
\\
-s [ \tfrac{1}{2} v^2 + e(u,\theta) ] &=  \big [  (\sigma (u,\theta) + \frac{\mu}{u} v_x) v ]
\end{aligned}
$$
reduce to the requirement that
\begin{equation}\label{interfcond}
s = 0 \, , \quad [v] = 0 \, , \quad \Big [ \sigma (u,\theta) + \frac{\mu}{u} v_x \Big ] = 0
\end{equation}
If two solutions are smooth on the domains right and left of an interface located at $x = x_0$ and satisfy across the interface the
jump conditions \eqref{interfcond} then they give rise to a weak solution of the problem \eqref{vhcg}.

\subsection{Construction of an oscillatory solution}
We construct a weak solution consisting of two uniform extension solutions connected  via a jump discontinuity satisfying \eqref{interfcond}
along an interface at $x = 0$.

\begin{lemma}
Let the constitutive theory be of the form \eqref{assform}, and let $A, B$ satisfy $0 < A < 2A < B $ and $\varphi_A , \varphi_B > 0$ be given
and suppose that $\tau (u) > 0$ for $u \in [A, 2A]$.
Define for $t \in [1,2]$
$$
u_A  = A t \, , \quad \theta_A = \bar \theta (t ; A, \varphi_A) \, , \qquad u_B = B t \, ,  \quad \theta_B = \bar \theta (t ; B , \varphi_B)
$$
where $\bar \theta (t, a , \varphi)$ solves the initial value problem \eqref{ivp-uss}. Then if  $\tau (\cdot)$ is selected to satisfy
\begin{equation}\label{interfcond2}
\tau (At) \theta_A (t) = \tau (B t)  \theta_B ( t) \quad t \in [1,2]
\end{equation}
then  \eqref{interfcond} is satisfied and 
\begin{equation}\label{spsoln1}
(u, v, \theta) = \begin{cases} (At, A x , \theta_A (t))   & x < 0 \\  ( Bt, B x , \theta_B (t))  & x > 0 \end{cases}
\end{equation}
is a weak solution of \eqref{vhcg} with $\kappa = 0$ for $t \in [1,2]$.
\end{lemma}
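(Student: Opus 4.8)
The plan is to verify directly that the piecewise function \eqref{spsoln1} is a weak solution by checking two things: first, that each branch is a classical solution of \eqref{vhcg} with $\kappa = 0$ on its half-space, and second, that the two branches match along the interface $x=0$ in the sense of the Rankine–Hugoniot conditions \eqref{interfcond}. The first point is essentially already done: by construction each branch $(u,v,\theta) = (at, ax, \bar\theta(t;a,\varphi))$ with $a = A$ or $a = B$ is a uniform extension solution, i.e. it solves \eqref{vhcg} precisely because $\bar\theta(t;a,\varphi)$ solves the initial value problem \eqref{ivp-uss}, which under the constitutive form \eqref{assform} is equivalent to the energy/entropy balance \eqref{vhcg}$_3$ for smooth processes. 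So the bulk of the argument reduces to the interface matching.

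For the interface, I would compute the three jumps appearing in \eqref{interfcond}. Since $v = Ax$ on $x<0$ and $v = Bx$ on $x>0$, we have $v \to 0$ from both sides as $x \to 0$, so $[v] = 0$; this forces the shock speed $s = 0$ from the first Rankine–Hugoniot relation $-s[u] = [v]$ (note $[u] = (B-A)t \ne 0$), and then the third relation $-s[\tfrac12 v^2 + e] = [(\sigma + \tfrac{\mu}{u} v_x) v]$ collapses because both $s = 0$ and $v|_{x=0} = 0$. Thus the only surviving condition is the continuity of the total stress, $[\sigma(u,\theta) + \tfrac{\mu}{u} v_x] = 0$ at $x = 0$. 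Evaluating at the interface: $v_x = A$ on the left and $v_x = B$ on the right, $u = At$ on the left and $u = Bt$ on the right, so $\tfrac{\mu}{u} v_x = \tfrac{\mu A}{At} = \tfrac{\mu}{t}$ on the left and $\tfrac{\mu B}{Bt} = \tfrac{\mu}{t}$ on the right — these are equal, so the viscous part of the stress is automatically continuous. Hence $[\sigma(u,\theta) + \tfrac{\mu}{u} v_x] = 0$ reduces to $[\sigma] = 0$, i.e. $\sigma(At, \theta_A(t)) = \sigma(Bt, \theta_B(t))$. Under \eqref{assform} one has $\sigma = \theta\,\tau(u)$, so this is exactly $\tau(At)\,\theta_A(t) = \tau(Bt)\,\theta_B(t)$, which is the assumed condition \eqref{interfcond2}.

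Finally I would assemble these observations into the statement that $(u,v,\theta)$ is a weak solution: test each equation of \eqref{vhcg} against a smooth compactly supported test function, split the space integral at $x=0$, integrate by parts on each half, and observe that the boundary terms produced at $x=0$ are precisely the Rankine–Hugoniot fluxes, which vanish by the computation above; the interior terms vanish because each branch is classical. One should also remark that $\theta_A, \theta_B > 0$ throughout $[1,2]$ (guaranteed by the explicit formula for $\bar\theta$ since $\mu, a, \varphi > 0$), and that $u_A = At, u_B = Bt$ stay positive and, given $0 < A < 2A < B$, that the relevant strain values lie in ranges where $\tau$ is as hypothesized — this is needed only so that the constitutive relations and the prescribed identity \eqref{interfcond2} make sense.

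I do not expect a serious obstacle here; the lemma is essentially a bookkeeping verification. The one point requiring a little care is making sure that the viscous contribution $\tfrac{\mu}{u} v_x$ really is continuous across $x=0$ — it is, but only because the extension rate $a$ cancels between numerator ($v_x = a$) and the specific volume ($u = at$), which is the structural reason the construction works and why the viscosity term places no additional constraint beyond $[\sigma] = 0$. The selection of $\tau(\cdot)$ realizing \eqref{interfcond2} for the given $A, B, \varphi_A, \varphi_B$ is a matter of choosing the function freely on disjoint intervals $[A,2A]$ and $[B,2B]$ (which are disjoint precisely because $2A < B$), and does not enter the verification of the lemma itself.
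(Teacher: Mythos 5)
Your verification is correct, but it is essentially complementary to what the paper's proof actually does. You spend your effort on the part the paper treats as already established by the two preceding subsections (uniform extension solutions solve the bulk equations; the Rankine--Hugoniot analysis reduces the interface to $s=0$, $[v]=0$, $[\sigma + \tfrac{\mu}{u}v_x]=0$), and your key observation --- that $\tfrac{\mu}{u}v_x = \tfrac{\mu a}{at} = \tfrac{\mu}{t}$ on both sides, so the viscous stress is automatically continuous and only $[\sigma]=0$, i.e.\ \eqref{interfcond2}, survives --- is a genuinely useful point that the paper leaves implicit. The paper's proof, by contrast, is devoted almost entirely to the step you wave away in your last sentence: showing that $\tau$ \emph{can} be selected to satisfy \eqref{interfcond2}. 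This is not simply ``choosing the function freely on disjoint intervals,'' because $\theta_B$ in \eqref{interfcond2} is itself defined through the IVP \eqref{ivp-uss} with $a=B$, whose coefficient is $B\,\tau(Bt)$ --- precisely the unknown piece of $\tau$ on $[B,2B]$. So \eqref{interfcond2} is an implicit, coupled condition, not a free prescription. The paper decouples it by noting that if \eqref{interfcond2} is to hold, then $B\,\tau(Bt)\,\theta_B = B\,\tau(At)\,\theta_A$, so one may first solve the ODE
\begin{equation*}
\frac{d\hat\theta}{dt} = \frac{\mu B}{t} + B\,\tau(At)\,\theta_A(t), \qquad \hat\theta(1)=\varphi_B,
\end{equation*}
whose right-hand side involves only known data, and then \emph{define} $\tau(Bt) := \tau(At)\theta_A(t)/\hat\theta(t)$ on $[B,2B]$ (well defined since $2A<B$ makes the intervals disjoint and $\hat\theta>0$); one checks a posteriori that $\hat\theta = \theta_B$. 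If the lemma is read literally --- with \eqref{interfcond2} taken as a hypothesis --- your proof is complete and arguably more detailed on the verification side; but to match the paper's intent you should either carry out this decoupling construction or at least acknowledge that realizing \eqref{interfcond2} is a nontrivial fixed-point-type selection rather than a free choice.
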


\begin{proof}
Let $\tau(\cdot)$ de defined in $[A, 2A]$ and let $\theta_A(t)$ be defined by \eqref{ivp-uss} with $a = A$ and $\varphi = \varphi_A$.
Let 
$$
g(t) = \tau (At) \theta_A (t)
$$
and define $\hat \theta (t)$ to be the solution of the initial value problem
$$
\begin{aligned}
\frac{d \hat \theta}{dt} &= \frac{\mu B}{t} + B \tau(At) \theta_A (t)
\\
\hat \theta ( t =1 ) &= \varphi_B
\end{aligned}
$$
Note that $\hat \theta (t) > 0$.

Given $\hat \theta (t)$ we select $\tau (\cdot)$ in the interval $[B, 2B]$ so that 
$$
\tau(Bt) = \tau(At) \frac{\theta_A (t)}{\hat \theta (t)}
$$
Since $2A < B$ the  function $\tau(\cdot)$ is well defined on $[B, 2B]$. Moreover, $\hat \theta$ solves \eqref{ivp-uss} and thus
$\hat \theta = \theta_B$. We conclude by noticing that \eqref{interfcond2} implies the interface condition
\eqref{interfcond} across the interface $x = 0$ and thus \eqref{spsoln1} is a weak solution.
\end{proof}

The above argument is symmetric in the placement of $A$ and $B$ and thus
\begin{equation}\label{spsoln2}
(u, v, \theta) = \begin{cases} (Bt, B x , \theta_B (t))   & x < 0 \\  ( At, A x , \theta_A (t))  & x > 0 \end{cases}
\end{equation}
is again a weak solution of \eqref{vhcg} with $\kappa = 0$ for $t \in [1,2]$. We conclude by iterating these two
solutions to construct an oscillating weak solution defined on $\R \times [1,2]$.
We proceed along the lines of section \ref{sec:1dnonmon}.

Fix states $A, B$, $\theta_A (t)$, $\theta_B (t)$ as above and suppose that \eqref{interfcond2} is satisfied. 
Let $0 < \lambda < 1$, let $c_\lambda = \lambda A + (1-\lambda) B$ and define
\begin{equation}\label{perU2}
\begin{aligned}
u(t,x)  &:= \begin{cases}
 A \,  t   &  \quad  \; \;  k <  x <  k + \lambda
 \\
 B \,  t   &  k + \lambda < x < k + 1
 \end{cases}
 \quad \; k \in \mathbb{Z}  
 \\
 \theta(t,x) &:= \begin{cases}
 \theta_A (t)    &  \quad  \; \;  k <  x <  k + \lambda
 \\
 \theta_B (t)    &  k + \lambda < x < k + 1
 \end{cases}
\quad \; k \in \mathbb{Z}  
\end{aligned}
\end{equation}
Next,  set
\begin{equation}
\label{perY2}
\begin{aligned}
y(t,x) = \int_0^x u (t, z) dz &= 
\begin{cases}
k c_\lambda t + (x-k) A t  & \quad  \; \;  k <  x <  k + \lambda
\\
 k c_\lambda t + \lambda A  t + \big ( x - k - \lambda) B t   &  k + \lambda < x < k + 1
\end{cases} \, , \quad k \in \mathbb{Z} \, ,
\\
y(t,x) = \del_t y (t,x) &=
\begin{cases}
k c_\lambda + (x-k) A    & \quad  \; \;  k <  x <  k + \lambda
\\
 k c_\lambda  + \lambda A + \big ( x - k - \lambda) B    &  k + \lambda < x < k + 1
\end{cases} \, , \quad k \in \mathbb{Z} \, .
\end{aligned}
\end{equation}
Then $(u, v , \theta)$ is a weak solution of \eqref{longonev} on $ [1,2]\times\R$ satisfying 
the equations a classical sense on $(1,2)\times (k, k + \lambda)$, $(1,2)\times(k + \lambda , k+1)$ and 
the Rankine-Hugoniot conditions at the interfaces $x = k$,  $x = k + \lambda$ for $k \in\mathbb{Z}$  and  $1 \le t \le 2$.

Finally, by rescaling 
\begin{equation}
\label{exsoln4}
y_n (t,x) = \frac{1}{n} y(t, nx)  \, \quad u_n = \del_x y_n = u (t, nx)  \, \quad {v_n} = \del_t y_n  =  \frac{1}{n}  v (t, nx) \,  \quad 
\theta_n (t, x) = \theta (t, nx)
\end{equation}
and restricting to the interval $(t,x) \in  Q = (1,2)\times (-1,1)$ we obtain a solution with sustained oscillations 
propagating from the data at $t=1$ to the solution in the interval $t \in [1,2]$.
One checks the limits
$$
\begin{aligned}
u_n  &\rightharpoonup  (A \lambda + B (1-\lambda) ) t  \quad \mbox{ weakly-$\star$ in  $L^\infty \big ( Q \big )$ }
\\
\del_x v_n &\rightharpoonup (A \lambda + B (1-\lambda) )  \quad \mbox{ weakly-$\star$ in  $L^\infty \big ( Q \big )$ }
\\
v_n &\to ((A \lambda + B (1-\lambda) ) x  \quad \mbox{ strongly in $L^2 \big ( Q \big )$ }
\\
\theta_n &\rightharpoonup  \big(\theta_A (t) \lambda + \theta_B (t) (1 -\lambda)\big)  \quad \mbox{ weakly-$\star$ in  $L^\infty \big ( Q \big )$ }
\end{aligned}
$$


\section{The compressible Navier-Stokes system in one space dimension}\label{sec:comprns}

In the final section we consider the one-dimensional version of \eqref{eq:compNS} in Eulerian coordinates:
\begin{equation}
\label{compNS}
\begin{aligned}
\rho_t  + (\rho u)_y  &= 0
\\
(\rho u)_t  + ( \rho u^2  +  p (\rho) )_y  &=  \mu u_{y y}
\end{aligned}
\end{equation}
where $\rho (t,y)$ and $u(t,y)$ are the density and velocity of the fluid expressed in Eulerian coordinates $(t,y)$ while $\mu > 0 $ is the viscosity.

The Rankine-Hugoniot jump conditions across a jump discontinuity that moves with speed $s$
have the form
\begin{equation}\label{RHgd}
\begin{aligned}
\big [ \rho (u-s) \big ] &= 0
\\
\big [ \rho u (u-s) \big ] &= \big [ - p(\rho) + \mu u_y \big ]
\end{aligned}
\end{equation}
It follows that there can be solutions that are continuous in velocity $[ u ] = 0$ but discontinuous in density, $[\rho] \ne 0$, 
across the interface moving with speed $u = s$. Such solutions will satisfy the conservation of mass on the interface \eqref{RHgd}$_1$.
The conservation of momentum reduces to the condition
\begin{equation}\label{RHmom}
\big [ - p(\rho) + \mu u_y \big ] = 0
\end{equation}
Observe that since $[ u ] = 0$ across the interface the jump $[u_y]$ is well defined.

\subsection{Langrangian to Eulerian}
The system \eqref{longonev}, which we now write in the notation
\begin{equation}
\label{longonev1}
\begin{aligned}
w_t &= v_x
\\
v_t &= \sigma(w)_x +  \del_x \left ( \frac{\mu}{w} v_{x} \right )
\end{aligned}
\end{equation}
is the Lagrangian form of the equation \eqref{compNS}.  The equivalence is effected via the transformation
$$
w = \frac{\del y}{\del x} \, , \quad v = \frac{\del y}{\del t}
$$
and setting
\begin{equation}\label{transf}
\frac{\del y}{\del t} (t,x) = v (t, x) = u (t, y(t,x)) \, , \quad \rho (t, y(t,x))  = \frac{1}{\frac{\del y}{\del x} (t,x)} = \frac{1}{w(t,x)}
\end{equation}
A direct calculation shows that for smooth solutions \eqref{longonev1} transforms to \eqref{compNS}. 
The same result can be achieved by a transformation of the weak forms provided that $y(t,x)$ is assumed to be a bi-Lipschitz homeomorphism, 
\cite{b-dafermos16}. 

Consider the special periodic solution of \eqref{longonev} that has the form \eqref{perU}, \eqref{perY}. Recall, that $a< 2a< b$ and that
for this solution
\begin{equation}\label{eqny}
\begin{aligned}
y(t,x) &= 
\begin{cases}
k v_0(\theta)  t + (x-k) a t  & \quad  \; \;  k <  x <  k + \theta
\\
 k v_0(\theta)  t + \theta a t + \big ( x - k - \theta) b t   &  k + \theta < x < k + 1
\end{cases} \, , \quad k \in \mathbb{Z} \, ,
\\
v(t,x)  &=
\begin{cases}
k v_0(\theta)  + (x-k) a   & \quad  \; \;  k <  x <  k + \theta
\\
 k v_0(\theta)  + \theta a  + \big ( x - k - \theta) b    &  k + \theta < x < k + 1
\end{cases} \, , \quad k \in \mathbb{Z} \, .
\end{aligned}
\end{equation}
where $v_0 (\theta) = \theta a + (1-\theta) b$ and $0 < \theta < 1$. Also, recall that
\begin{equation}\label{eqnw}
\begin{aligned}
w(t,x) &= 
\begin{cases}
t a & \quad  \; \;  k <  x <  k + \theta
\\
 t b  &  k + \theta < x < k + 1
\end{cases} \, , \quad k \in \mathbb{Z} \, ,
\end{aligned}
\end{equation}

When we transform \eqref{eqny} and \eqref{eqnw} to Eulerian coordinates via the transformation \eqref{transf}
we obtain the following Eulerian form
\begin{align}
u(t,y) &= \frac{y}{t}
\label{eqnu}
\\
\rho(t,y) &= 
\begin{cases} \;  \frac{1}{ta}  \quad & \quad  0 < \frac{y}{t}  - k v_0 (\theta) <  a \theta   \\[5pt]
                      \;  \frac{1}{tb}  \quad &  \; \; a \theta < \frac{y}{t}  - k v_0 (\theta) < v_0 (\theta)
\end{cases}
\quad k \in \mathbb{Z}
\label{eqnrho}
\end{align}
\begin{remark} The function
$$
\bar \rho (t, y) = \frac{1}{ta} \, , \quad \bar u (t,y) = \frac{y}{t}
$$
with $a > 0$ is an exact solution of the system \eqref{compNS}
\end{remark}

For the solution \eqref{eqnu}-\eqref{eqnrho} we note that (i) it has discontinuities in $\rho$ on the lines $\xi_k (t) = k t v_0 (\theta)$ and 
$\xi_{k, \theta} = k t v_0 (\theta) + t a \theta$. (ii) On the line $\xi_k (t)$ we have
$$
\frac{d \xi_k}{dt} = k v_0(\theta) = u (t,y) \Big |_{y = \xi_k (t)}
$$
(iii) On the line $\xi_{k, \theta} (t)$ we have
$$
\frac{d \xi_k}{dt} = k v_0(\theta)  + a \theta = u (t,y) \Big |_{y = \xi_{k, \theta} (t)}
$$
(iv) Let us denote 
$$I_k = \{ (t, y) : 0 < y - k t v_0 (\theta) < t a \theta \} \, ,  \quad 
J_k = \{ (t, y) : t a \theta < y - k t v_0 (\theta) < t v_0 (\theta) \} \, .
$$ 
Then we compute on each domain $I_k$ and $J_k$ that
$$
\del_t \rho + \del_y (\rho u) = 0
$$
and 
$$
\begin{aligned}
\del_t (\rho u)  + \del_y (\rho u^2 ) &= 0 
\\
\del_y \Big ( - p(\rho) + \mu \frac{\del u}{\del y} \Big ) = 0
\end{aligned}
$$
that is the equations \eqref{compNS} are satisfied on each of $I_k$, $J_k$. 
(v) Suppose that we impose the hypothesis on the pressure
\begin{equation}\label{hyppr}
p \left ( \frac{1}{at} \right ) = p \left ( \frac{1}{bt} \right )  \quad \mbox{for \;  $t \in [1,2]$}
\tag {AP}
\end{equation}
then on the interface $\xi_k (t) = k t v_0 (\theta)$ we have
$$
[ \rho u (u - \dot \xi_k) ] = 0 ,   [ - p (\rho) + \mu u_y  ] = 0
$$
The same relations also hold on the interface $\xi_{k,\theta} (t) = k t v_0 (\theta) + t a \theta$. We thus conclude

\begin{proposition}
Let the pressure satisfy \eqref{hyppr}. Then if $0< a < 2 a < b < 2b$ then \eqref{eqnu}-\eqref{eqnrho} define a weak solution 
of the compressible Navier-Stokes system \eqref{compNS}
defined for $y \in \R$,  $t \in [1,2]$.
\end{proposition}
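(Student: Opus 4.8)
The plan is to verify directly that the pair $(\rho,u)$ in \eqref{eqnu}--\eqref{eqnrho} satisfies the weak formulation of \eqref{compNS} on $(1,2)\times\R$, organising the argument exactly along items (i)--(v) preceding the statement. First I would fix the notion of weak solution: for every $\phi\in C_c^\infty((1,2)\times\R)$ one tests the two equations of \eqref{compNS}, integrates by parts so that all derivatives (including $\mu u_{yy}$, which after one integration by parts becomes $\mu u\,\phi_{yy}$) land on $\phi$, and requires the resulting identities to vanish. This is meaningful because on $[1,2]\times\R$ the functions $\rho$, $u$ and $u_y=1/t$ are bounded, with $\rho$ bounded below by $1/(2b)>0$, so all of $\rho u$, $\rho u^2$, $p(\rho)$ are locally bounded. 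Since the only discontinuous fields are $\rho$ and the quantities built from it, and the discontinuities sit on the Lipschitz graphs $\xi_k(t)=k t v_0(\theta)$ and $\xi_{k,\theta}(t)=k t v_0(\theta)+ t a\theta$, $k\in\Z$, which are pairwise disjoint and met by any fixed $\phi$ in only finitely many strips, the weak identity is equivalent to the conjunction of: (a) the equations of \eqref{compNS} hold classically in the interior of each strip $I_k$, $J_k$; and (b) the Rankine--Hugoniot relations \eqref{RHgd} hold across each interface $\xi_k$, $\xi_{k,\theta}$.

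For (a) the computation is immediate. Since $u=y/t$ is linear in $y$ we have $u_{yy}=0$. Inside a strip $\rho=1/(ct)$ with $c\in\{a,b\}$, hence $\rho_t=-1/(ct^2)$ and $(\rho u)_y=\partial_y\big(y/(ct^2)\big)=1/(ct^2)$, so $\rho_t+(\rho u)_y=0$. Likewise $(\rho u)_t=\partial_t\big(y/(ct^2)\big)=-2y/(ct^3)$ and $(\rho u^2)_y=\partial_y\big(y^2/(ct^3)\big)=2y/(ct^3)$, while $p(\rho)$ is a function of $t$ alone within the strip so $\partial_y p(\rho)=0$; therefore $(\rho u)_t+(\rho u^2+p(\rho))_y=0=\mu u_{yy}$.

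For (b) the point, recorded in items (ii)--(iii), is that on each interface the curve speed equals the fluid velocity, $\dot\xi_k=k v_0(\theta)=u|_{y=\xi_k(t)}$ and $\dot\xi_{k,\theta}=k v_0(\theta)+a\theta=u|_{y=\xi_{k,\theta}(t)}$, and $u$ is continuous there. Hence $u-s=0$ on both sides, so $[\rho(u-s)]=0$ and $[\rho u(u-s)]=0$ hold automatically, and the momentum jump condition reduces to \eqref{RHmom}, i.e. $[-p(\rho)+\mu u_y]=0$. On both sides $u_y=1/t$, so $[\mu u_y]=0$; the two adjacent values of $\rho$ are $1/(at)$ and $1/(bt)$, so by hypothesis \eqref{hyppr} we get $[p(\rho)]=p(1/(at))-p(1/(bt))=0$. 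Thus (b) holds, and assembling (a) with (b) --- the interior contributions vanish by (a), while the interface boundary terms produced by the region-by-region integration by parts are exactly the jump expressions of \eqref{RHgd} and vanish by (b) --- yields the weak formulation.

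Finally, two comments. The role of the chain $0<a<2a<b<2b$ is to make \eqref{hyppr} realisable by a genuine (necessarily non-monotone) pressure: for $t\in[1,2]$ the arguments $1/(at)$ and $1/(bt)$ sweep out the intervals $[1/(2a),1/a]$ and $[1/(2b),1/b]$, which are disjoint precisely because $2a<b$, so $p$ may be assigned equal values on them without further constraint. The only real bookkeeping step --- what I expect to be the main obstacle --- is to check carefully that the distributional derivatives of the piecewise fields produce exactly the jump terms in \eqref{RHgd} along each $\xi_k$, $\xi_{k,\theta}$; this is the standard divergence-theorem computation on the two adjacent sub-strips once each interface is written as a graph $y=\xi(t)$, and the sum over $k\in\Z$ causes no trouble because every test function is supported in finitely many strips. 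Everything else is the elementary substitution above.
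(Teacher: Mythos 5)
Your proposal is correct and follows essentially the same route as the paper, whose proof consists precisely of the observations (i)--(v) preceding the statement: classical verification of \eqref{compNS} on each strip $I_k$, $J_k$, the identification $u=s$ on each interface $\xi_k$, $\xi_{k,\theta}$, and the reduction of the momentum jump to $[-p(\rho)+\mu u_y]=0$, which holds by \eqref{hyppr} together with the continuity of $u_y=1/t$. The only (immaterial) slip is the remark that $\mu u_{yy}$ becomes $\mu u\,\phi_{yy}$ after \emph{one} integration by parts; it takes two, or one to reach $-\mu u_y\phi_y$, but since $u=y/t$ is smooth this does not affect the argument.
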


For states $a$, $b$ satisfying $0< a < 2a < b < 2b$ the condition \eqref{hyppr} can be fulfilled and results 
to nonmonotone equations for the pressure.

\subsection{ Sustained oscillations}
Next define the sequence of solutions $(\rho_n , u_n )$ by
\begin{equation}\label{oscisol}
u_n (t, y) = \frac{1}{n} u (t, n y) \, , \quad \rho_n (t, y) = \rho (t, n y)   \qquad y \in (-1,1), \;  t \in [1,2] \, ,
\end{equation}
where $(\rho, u)$ is given by \eqref{eqnu}-\eqref{eqnrho}, $n \in \mathbb{N}$. Then we have
$$
u_n (t,y) = \frac{y}{t} 
$$
and
$$
\rho_n (t,y) = 
\begin{cases} \;  \frac{1}{ta}  \quad &  \qquad  \quad  \frac{k}{n} v_0 (\theta) < \frac{y}{t}  <  \frac{k}{n}  v_0 (\theta) + \frac{1}{n}  a \theta   \\[5pt]
                      \;  \frac{1}{tb}  \quad & \frac{k}{n} v_0 (\theta) + \frac{1}{n}  a \theta   < \frac{y}{t}   <\frac{k+ 1}{n}  v_0 (\theta) 
\end{cases}
\quad k \in \mathbb{Z}
$$

It is easy to check that $(\rho_n, u_n)$ is a weak solution of \eqref{compNS}. Indeed, since as we saw $(\rho, u)$ is a weak solution of \eqref{compNS}
on $[1,2] \times \R$, we have for $\phi \in C^\infty_c ([1,2)\times \R)$
\begin{equation}\label{weak1}
\begin{aligned}
\iint \rho(t,y) \del_t \phi (t,y) dt dy + \iint \rho(t,y) u (t,y)  \del_y \phi (t,y) dt dy + \int \rho (1,y) \phi(1,y) dy = 0 \, .
\end{aligned}
\end{equation}
We introduce the test function $\phi (t, y) = \frac{1}{n} \varphi (t, \frac{1}{n} y )$ in \eqref{weak1}, perform the change of variables $y' = \frac{1}{n} y$
and use \eqref{oscisol} to obtain
\begin{equation}\label{newweak1}
\begin{aligned}
\iint \rho_n (t,y) \del_t \varphi (t,y) dt dy + \iint \rho_n (t,y) u_n (t,y)  \del_y \varphi (t,y) dt dy + \int \rho_n  (1,y) \varphi(1,y) dy = 0 \, ,
\end{aligned}
\end{equation}
that is $(\rho_n, u_n)$ satisfies the weak form of \eqref{compNS}$_1$. A similar calculation shows that $(\rho_n, u_n)$ also satisfies the weak form
of \eqref{compNS}$_2$.

If the sequence $(\rho_n, u_n)$ is restricted on $Q = [1,2]\times (-1,1)$ we see that
$$
\rho_n \rightharpoonup \rho = \theta \frac{1}{ta} + (1-\theta) \frac{1}{tb} \quad \mbox{ weakly-$\star$ in  $L^\infty \big ( Q \big )$ }
$$
Of course $u_n = \frac{y}{t}$ converges strongly.

\begin{theorem}
Under the hypothesis \eqref{hyppr} there is a sequence of weakly converging initial data so that the solution $(\rho_n, u_n)$ of
\eqref{compNS} has the property that $\rho_n \rightharpoonup \rho$ weakly while $u_n \to u$ strongly.
\end{theorem}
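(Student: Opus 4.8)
The plan is to assemble the statement from the explicit construction already carried out in Section~\ref{sec:comprns}; no new idea is needed beyond collecting the pieces. First I would invoke the preceding Proposition: under \eqref{hyppr}, and for any fixed $\theta\in(0,1)$ and states $0<a<2a<b<2b$, the pair $(\rho,u)$ defined by \eqref{eqnu}--\eqref{eqnrho} is a weak solution of \eqref{compNS} on $[1,2]\times\R$. Recall that \eqref{hyppr} enters precisely to enforce $[-p(\rho)+\mu u_y]=0$ across the interfaces $\xi_k(t)=ktv_0(\theta)$ and $\xi_{k,\theta}(t)=ktv_0(\theta)+ta\theta$; that on those lines $u=y/t$ is smooth, so $[u]=[u_y]=0$; and that the interface speeds coincide with $u$, which makes the mass jump condition in \eqref{RHgd} automatic.

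Second, I would pass to the rescaled family $(\rho_n,u_n)$ of \eqref{oscisol}, so that $u_n(t,y)=\tfrac1n u(t,ny)=\tfrac yt$ (in particular \emph{independent of $n$}) while $\rho_n(t,y)=\rho(t,ny)$ is $\tfrac1n$-periodic in $y$, and check that each $(\rho_n,u_n)$ is still an exact weak solution. For the continuity equation this is the passage from \eqref{weak1} to \eqref{newweak1}: substitute the test function $\phi(t,y)=\tfrac1n\varphi(t,\tfrac yn)$ into the weak form satisfied by $(\rho,u)$, change variables $y'=y/n$, and use $\tfrac1n u(t,ny)=u_n$, $\rho(t,ny)=\rho_n$; the factor $\tfrac1n$ in $\phi$ cancels the Jacobian and the same weak identity reappears for $(\rho_n,u_n)$, \emph{including} the initial trace at $t=1$. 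The momentum balance is handled identically; since $u_n=y/t$ is globally smooth the viscous term $\mu\partial_{yy}u_n\equiv0$, and the only interface contributions are the Rankine--Hugoniot identities \eqref{RHmom}, which are pointwise relations in $t$ among $\rho$, $u$, $u_y$ and hence invariant under $y\mapsto ny$.

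Third, I would identify the oscillating data and compute the limits. The initial density $\rho_n(1,\cdot)$ is the $\tfrac1n$-periodic step function equal to $1/a$ on strips of length $\theta/n$ and to $1/b$ on strips of length $(1-\theta)/n$; it converges weak-$\star$ in $L^\infty$, but not strongly, to the constant $\tfrac{\theta}{a}+\tfrac{1-\theta}{b}$ --- indeed the associated Young measure is $\theta\,\delta_{1/a}+(1-\theta)\,\delta_{1/b}\ne\delta_{\theta/a+(1-\theta)/b}$, equivalently $\liminf_n\|\rho_n(1,\cdot)-(\tfrac\theta a+\tfrac{1-\theta}b)\|_{L^2(-1,1)}>0$. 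Restricting to $Q=[1,2]\times(-1,1)$, the same averaging over one period gives $\rho_n\weaktos\rho:=\theta\tfrac1{ta}+(1-\theta)\tfrac1{tb}$ in $L^\infty(Q)$, while $u_n=y/t$ converges (trivially) strongly to $u=y/t$. Combining the second and third steps proves the theorem.

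The statement being a packaging of the section, there is no genuine obstacle; the one point that deserves care is the assertion that the rescaling commutes with the notion of weak solution --- that $(\rho_n,u_n)$ remains an \emph{exact} distributional solution of \eqref{compNS} with the prescribed initial trace and with the interface conditions intact --- which is exactly the change-of-variables argument \eqref{weak1}--\eqref{newweak1} extended to the momentum balance. If one wishes to phrase the conclusion as genuine \emph{propagation} of oscillations from the data at $t=1$ into $[1,2]\times\R$, one should additionally remark that the solution is the unique one launched by $\rho_n(1,\cdot)$ within the class of weak solutions with density bounded and bounded away from zero, which is standard.
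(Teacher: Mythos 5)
Your proposal is correct and follows essentially the same route as the paper: invoke the Proposition that $(\rho,u)$ from \eqref{eqnu}--\eqref{eqnrho} is a weak solution under \eqref{hyppr}, show the rescaled family \eqref{oscisol} remains a weak solution via the test-function change of variables \eqref{weak1}--\eqref{newweak1} (extended to the momentum balance), and read off the weak limits. One shared caveat: in Eulerian variables the strips on which $\rho_n=\tfrac{1}{ta}$ and $\rho_n=\tfrac{1}{tb}$ have lengths $\tfrac{ta\theta}{n}$ and $\tfrac{t(1-\theta)b}{n}$ over a period $\tfrac{tv_0(\theta)}{n}$, so the weak-$\star$ limit is $\tfrac{1}{t(\theta a+(1-\theta)b)}$ rather than $\theta\tfrac{1}{ta}+(1-\theta)\tfrac{1}{tb}$ as written in both your proposal and the paper; this does not affect the theorem's assertion of weak-but-not-strong convergence.
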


\begin{remark}
Note that $(\rho_n, u_n)$ are also weak solutions of the pressureless Euler equations, and 
the assumption \eqref{hyppr} guarantees that the total stress is a function of time.
\end{remark}

\bigskip
{\bf Statements and Declarations}

{\bf Data availability} 
The data supporting the findings of this study are available within the paper,

{\bf Competing Interests}
The author has no relevant competing interests to declare.

{\bf Funding} 
Research supported by KAUST baseline funds, No BAS/1/1652-01-01

%
%
%
%
%
%




%
\end{document}